\title{On fine properties of mixtures with respect to\\
  concentration of measure and Sobolev type inequalities}
\author{Djalil~\textsc{Chafa\"\i} and Florent~\textsc{Malrieu}}
\date{Preprint 2009}
\newtheorem{thm}{Theorem}[section]%
\newtheorem{cor}[thm]{Corollary}%
\newtheorem{lem}[thm]{Lemma}%
\newtheorem{xpl}[thm]{Example}%
\newtheorem{rem}[thm]{Remark}%
\newcommand{\dE}{\mathbb{E}}
\newcommand{\dR}{\mathbb{R}}
\newcommand{\cB}{\mathcal{B}}
\newcommand{\cC}{\mathcal{C}}
\newcommand{\cL}{\mathcal{L}}
\newcommand{\cM}{\mathcal{M}}\newcommand{\cN}{\mathcal{N}}
\newcommand{\cS}{\mathcal{S}}
\newcommand{\cU}{\mathcal{U}}
\newcommand{\cX}{\mathcal{X}}
\newcommand{\bE}{\mathbf{E}}
\newcommand{\al}{\alpha}
\newcommand{\be}{\beta}
\newcommand{\de}{\delta}
\newcommand{\ga}{\gamma}
\newcommand{\la}{\lambda}
\newcommand{\na}{\nabla}
\newcommand{\si}{\sigma}
\newcommand{\Te}{\Theta}
\newcommand{\te}{\theta}
\newcommand{\vphi}{\varphi}
\newcommand{\p}[4]{{#3}\!\left#1{#4}\right#2} 
\newcommand{\PAR}[1]{{{\left(#1\right)}}} 
\newcommand{\ABS}[1]{{{\left| #1 \right|}}} 
\newcommand{\BRA}[1]{{{\left\{#1\right\}}}} 
\newcommand{\DP}[1]{{{\left<#1\right>}}} 
\newcommand{\NRM}[1]{{{\left\| #1\right\|}}} 
\newcommand{\pd}{{\partial}} 
\newcommand{\LIP}[1]{{\|#1\|_{\mathrm{Lip}}}} 
\newcommand{\entf}[1]{\mathbf{Ent}_{#1}}
\newcommand{\ent}[2]{\p(){\entf{#1}}{#2}}
\newcommand{\moyf}[1]{\bE_{#1}}
\newcommand{\varf}[1]{\mathbf{Var}_{#1}}
\newcommand{\var}[2]{\p(){\varf{#1}}{#2}}
\newcommand{\OL}[1]{\overline{#1}}
\newcommand{\WH}[1]{\widehat{#1}}
\renewcommand{\leq}{\leqslant}
\renewcommand{\geq}{\geqslant}
\newcommand{\ds}[1]{\displaystyle{#1}}
\begin{document}

\maketitle

\begin{abstract}
  Mixtures are convex combinations of laws. Despite this simple definition, a
  mixture can be far more subtle than its mixed components. For instance,
  mixing Gaussian laws may produce a potential with multiple deep wells. We
  study in the present work fine properties of mixtures with respect to
  concentration of measure and Sobolev type functional inequalities. We
  provide sharp Laplace bounds for Lipschitz functions in the case of generic
  mixtures, involving a transportation cost diameter of the mixed family.
  Additionally, our analysis of Sobolev type inequalities for two-component
  mixtures reveals natural relations with some kind of band isoperimetry and
  support constrained interpolation via mass transportation. We show that the
  Poincar\'e constant of a two-component mixture may remain bounded as the
  mixture proportion goes to $0$ or $1$ while the logarithmic Sobolev constant
  may surprisingly blow up. This counter-intuitive result is not reducible to
  support disconnections, and appears as a reminiscence of the
  variance-entropy comparison on the two-point space. As far as mixtures are
  concerned, the logarithmic Sobolev inequality is less stable than the
  Poincar\'e inequality and the sub-Gaussian concentration for Lipschitz
  functions. We illustrate our results on a gallery of concrete two-component
  mixtures. This work leads to many open questions.
\end{abstract}

{\footnotesize %
\noindent\textbf{Keywords.} 
Mixtures of distributions; finite Gaussian mixtures; concentration of measure;
Gaussian bounds; tails probabilities; deviation inequalities; functional
inequalities; Poincar\'e inequalities; Gross logarithmic Sobolev inequalities;
band isoperimetry; transportation of measure; mass transportation;
transportation cost distances; Mallows or Wasserstein distance.

\medskip

\noindent\textbf{AMS-MSC.}  60E15; 49Q20; 46E35; 62E99 %
} 

{
\footnotesize
\tableofcontents
}

\section{Introduction}

Mixtures of distributions are ubiquitous in Stochastic Analysis, Modelling,
Simulation, and Statistics, see for instance the monographs
\cite{MR624267,MR2265601,MR1789474,MR926484,MR838090}. Recall that a mixture
of distributions is nothing else but a \emph{convex combination} of these
distributions. For instance, if $\mu_0$ and $\mu_1$ are two laws on the same
space, and if $p\in[0,1]$ and $q=1-p$, then the law $p\mu_1+q\mu_0$ is a
``two-component mixture''. More generally, a \emph{finite mixture} takes the
form $p_1\mu_1+\cdots+p_n\mu_n$ where $\mu_1,\ldots,\mu_n$ are probability
measures on a common measurable space and $p_1\de_1+\cdots+p_n\de_n$ is a
finite discrete probability measure. A widely used example is given by finite
mixtures of Gaussians for which $\mu_i=\cN(m_i,\si_i^2)$ for every $1\leq
i\leq n$. In that case, for certain choices of $m_1,\ldots,m_n$ and
$\si_1,\ldots,\si_n$, the mixture
$$
p_1\cN(m_1,\si_1^2)+\cdots+p_n\cN(m_n,\si_n^2)
$$
is multi-modal and its log-density is a multiple wells potential. For
instance, each component $\mu_i$ may correspond typically in Statistics to a
sub-population, in Information Theory to a channel, and in Statistical Physics
to an equilibrium. Another very natural example is given by the invariant
measures of finite Markov chains, which are mixtures of the invariant measures
uniquely associated to each recurrent class of the chain. A more subtle
example is the local field of the Sherrington-Kirkpatrick model of spin
glasses which gives rise to a mixture of two univariate Gaussians with equal
variances, see for instance \cite{chatterjee-2007}.

At this point, it is enlightening to introduce a bit more abstract point of
view. Let $\nu$ be a probability measure on some measurable space $\Te$ and
$(\mu_\te)_{\te\in\Te}$ be a collection of probability measures on some common
fixed measurable space $\cX$, such that the map $\te\mapsto\bE_{\mu_\te}f$ is
measurable for any fixed bounded continuous $f:\cX\to\dR$. The mixture
$\cM(\nu,\mu_{\te\in\Te})$ is the law on $\cX$ defined for any bounded
measurable $f:\cX\to\dR$ by
$$
\moyf{\cM(\nu,\mu_{\te\in\Te})}{f} %
=\int_\Te\!\int_\cX\!f(x)\,d\mu_\te(x)\,d\nu(\te) %
=\bE_{\nu}(\te\mapsto\bE_{\mu_\te}f).
$$
Here $\nu$ is the \emph{mixing law} whereas $(\mu_\te)_{\te\in\Te}$ are the
\emph{mixed laws} or the \emph{mixture components} or even the \emph{mixed
  family}. With these new notations, and for the finite mixture example
mentioned earlier we have $\Te=\{1,\ldots,n\}$ and
$\nu=p_1\de_1+\cdots+p_n\de_n$ and
$$
\cM(\nu,(\mu_\te)_{\te\in\Te}) %
=\cM(p_1\de_1+\cdots+p_n\de_n,\{\mu_1,\ldots,\mu_n\}) %
=p_1\mu_1+\cdots+p_n\mu_n.
$$
The mixture $\cM(\nu,\mu_{\te\in\Te})$ can be seen as a sort of general convex
combination in the convex set of probability measures on $\cX$. It appears for
certain class of $\nu$ as a particular Choquet's Integral, see
\cite{MR1835574} and \cite{MR1863703}. On the other hand, the case where the
mixture components are product measures is also related to exchangeability and
De Finetti's Theorem, see for instance \cite{MR2137450}. In terms of random
variables, if $(X,Y)$ is a couple of random variables then the law $\cL(X)$ of
$X$ is a mixture of the family of conditional laws $\cL(X|Y=y)$ with the
mixing law $\cL(Y)$. By this way, mixing appears as the dual of the so-called
disintegration of measure. Here and in the whole sequel, the term ``mixing''
refers to the mixture of distributions as defined above and has \emph{a
  priori} nothing to do with weak dependence.

Our first aim is to investigate the fine behavior of concentration of measure
for mixtures, for instance for a two-component mixture $p\mu_1+q\mu_0$ as
$\min(p,q)$ goes to $0$. It is well known that Poincar\'e and (Gross)
logarithmic Sobolev functional inequalities are powerful tools in order to
obtain concentration of measure. Also, our second aim is to investigate the
fine behavior of these functional inequalities for mixtures, and in particular
for two-component mixtures. Our work reveals striking unexpected phenomena. In
particular, our results suggest that the logarithmic Sobolev inequality, which
implies sub-Gaussian concentration, is very sensitive to mixing, in contrast
with the sub-Gaussian concentration itself which is far more stable. As in
\cite{MR1814423} and \cite{MR2317340}, our work is connected to the more
general problem of the behavior of optimal constants for sequences of
probability measures.

Let us start with the notion of \emph{concentration of measure} for Lipschitz
functions. We denote by $\NRM{\cdot}_2$ the Euclidean norm of $\dR^d$. A
function $f:\dR^d\to\dR$ is Lipschitz when
$$
\LIP{f}=\sup_{x\neq y}\frac{|f(x)-f(y)|}{\NRM{x-y}_2}<\infty.
$$ 
Let $\mu$ be a law on $\dR^d$ such that $\bE_\mu|f|<\infty$ for every
Lipschitz function $f$. This holds true for instance when $\mu$ has a finite
first moment. We always make implicitly this assumption in the sequel. We
define now the log-Laplace transform $\al_\mu:\dR\to[0,\infty]$ of $\mu$ by
\begin{equation}\label{eq:def-alpha}
  \al_\mu(\la)= %
  \log \sup_{\LIP{f}\leq 1} %
  \bE_\mu\PAR{e^{\la(f-\moyf{\mu}{f})}}.
\end{equation}
The Cram\'er-Chernov-Chebychev inequality gives, for every $r>0$,
\begin{equation}\label{eq:def-beta}
  \be_\mu(r)= \sup_{\LIP{f}\leq 1} %
  \mu\PAR{\ABS{f-\bE_\mu f}\geq r} %
  \leq 2\exp\PAR{-\sup_{\la>0}\PAR{r\la-\al_\mu(\la)}}
\end{equation}
and the supremum in the right hand side is a Fenchel-Legendre transform of
$\al_\mu$. Note that $\be_\mu$ is a uniform upper bound on the tails
probabilities of Lipschitz images of $\mu$. We are interested in the control
of $\be_\mu$ via $\al_\mu$ in the case where
$\mu=\cM(\nu,(\mu_\te)_{\te\in\Te})$, in terms of the mixing law $\nu$ and of
the log-Laplace bounds $(\al_{\mu_\te})_{\te\in\Te}$ for the mixed family.

We say that $\mu$ satisfies a \emph{sub-Gaussian concentration of measure} for
Lipschitz functions when there exists a constant $C\in(0,\infty)$ such that
for every real number $\la$,
\begin{equation}\label{eq:subgau-al}
  \al_\mu(\la)\leq \frac{1}{4}C\la^2.
\end{equation}
The $\log$-Laplace-Lipschitz quadratic bound \eqref{eq:subgau-al} implies via
\eqref{eq:def-beta} that for every $r>0$,
\begin{equation}\label{eq:subgau-be}
  \be_\mu(r)\leq 2\exp\PAR{-\frac{r^2}{C}}.
\end{equation}
Actually, it was shown (see \cite{MR2078555} and \cite{bolley-villani}) that
up to constants, \eqref{eq:subgau-al} and \eqref{eq:subgau-be} are equivalent,
and are also equivalent to the existence of a constant
$\varsigma\in(0,\infty)$ and $x_0\in\dR^d$ such that
\begin{equation}\label{eq:int-expsq}
  \int_{\dR^d}\!e^{\varsigma|x-x_0|^2}\,\mu(dx)<\infty.
\end{equation}
Linear or quadratic upper bounds for $\al_\mu$ may be deduced from functional
inequalities such as Poincar\'e and (Gross) logarithmic Sobolev inequalities
\cite{MR0420249,MR2325763}. We say that $\mu$ satisfies a Poincar\'e
inequality of constant $C\in(0,\infty)$ when for every smooth $h:\dR^d\to\dR$,
\begin{equation}\label{eq:def-PI}
  \mathbf{Var}_\mu(h)\leq C\,\bE(|\nabla h|^2)
\end{equation}
where $\mathbf{Var}_\mu(h)=\bE_\mu(h^2)-(\bE_\mu h)^2$ is the \emph{variance}
of $h$ for $\mu$. The smallest possible constant $C$ is called the
\emph{optimal Poincar\'e constant of $\mu$} and is denoted
$C_\textsc{PI}(\mu)$ with the convention $\inf\emptyset=\infty$. Similarly,
$\mu$ satisfies a (Gross) logarithmic Sobolev inequality of constant
$C\in(0,\infty)$ when
\begin{equation}\label{eq:def-GI}
  \mathbf{Ent}_\mu(h^2)\leq C\,\bE(|\nabla h|^2)
\end{equation}
for every smooth function $f:\dR^d\to\dR$, where
$\mathbf{Ent}_\mu(h^2)=\bE_\mu(h^2\log h^2)-\bE_\mu(h^2)\log\bE_\mu(h^2)$ is
the \emph{entropy} or \emph{free energy} of $h^2$ for $\mu$, with the
convention $0\log(0)=0$. As for the Poincar\'e inequality, the smallest
possible $C$ is the \emph{optimal logarithmic Sobolev constant of $\mu$} and
is denoted $C_\textsc{GI}(\mu)$ with $\inf\emptyset=\infty$. Standard
linearization arguments give that
\begin{equation}\label{eq:K-PI-GI}
  \rho(K_\mu) %
  \leq C_\textsc{PI}(\mu) %
  \leq \frac{1}{2}\,C_\textsc{GI}(\mu)
\end{equation}
where $\rho(K_\mu)$ stands for the spectral radius of the covariance matrix
$K_\mu$ of $\mu$ defined by
$(K_\mu)_{i,j}=\bE_\mu(x_ix_j)-\bE_\mu(x_i)\bE_\mu(x_j)$ where $x_i$ and $x_j$
are the coordinate functions. More precisely, the first inequality in
\eqref{eq:K-PI-GI} follows from \eqref{eq:def-PI} by taking $h=\DP{\cdot,u}$
where $u$ runs over the unit sphere while the second inequality in
\eqref{eq:K-PI-GI} follows by considering the directional derivative of both
sides of \eqref{eq:def-GI} at the constant function $1$.

A basic example is given by Gaussian laws for which equalities are achieved in
\eqref{eq:K-PI-GI}. A wide class of laws satisfy Poincar\'e and logarithmic
Sobolev inequalities. Beyond Gaussian laws, a criterion due to Bakry \&
\'Emery \cite{MR889476,MR2002g:46132} (see also \cite{MR2291434},
\cite{MR0109101,MR1132315}, and \cite{caffarelli-fkg,caffarelli-fkg-bis})
states that if $\mu$ has Lebesque density $e^{-V}$ on $\dR^d$ such that
$x\mapsto V(x)-\frac{1}{2\kappa}|x|^2$ is convex for some fixed real
$\kappa>0$ then $C_\textsc{PI}(\mu)\leq \kappa$ and $C_\textsc{GI}(\mu)\leq
2\kappa$ with equality in both cases when $\mu$ is Gaussian. This log-concave
criterion appears as a comparison with Gaussians. Note that in general,
$C_\textsc{GI}(\mu)<\infty$ implies $C_\textsc{PI}(\mu)<\infty$ but the
converse is false. For instance, the law with density proportional to
$\exp(-|x|^a)$ on $\dR$ satisfies a Poincar\'e inequality iff $a\geq1$ and a
logarithmic Sobolev inequality iff $a\geq2$, see e.g. \cite[Chapter
6]{MR2002g:46132}. Note also that if $\mu$ has disconnected support, then
necessarily $C_\textsc{PI}(\mu)=C_\textsc{GI}(\mu)=\infty$. To see it,
consider a non constant $h$ which is constant on each connected component of
the support of $\mu$. This is for instance the case for the two-component
mixture $\mu=p\mu_1+q\mu_0=\cM(p\de_1+q\de_0,\{\mu_0,\mu_1\})$ with
$p\in(0,1)$ and $q=1-p$ where $\mu_0$ and $\mu_1$ have disjoint supports.

The logarithmic Sobolev inequality \eqref{eq:def-GI} implies a sub-Gaussian
concentration of measure for Lipschitz images of $\mu$. Namely, using
\eqref{eq:def-GI} with $h=\exp(\frac{1}{2}\lambda f)$ for a real number
$\lambda$ and a smooth Lipschitz function $f:\dR^d\to\dR$ gives via
Rademacher's Theorem and a standard argument attributed to Herbst
\cite[Chapter 5]{ledoux-ams} that for any reals $\la$ and $r>0$
\begin{equation}\label{eq:GI-to-alpha-beta}
  \al_\mu(\lambda)\leq \frac{1}{4}C_\textsc{GI}(\mu)\la^2 %
  \quad\text{and}\quad %
  \be_\mu(r) \leq 2\exp\PAR{-\frac{r^2}{C_\textsc{GI}(\mu)}}.
\end{equation}
The same method yields from \eqref{eq:def-PI} a sub-exponential upper bound
for $\be_\mu$ of the form $c_1\exp(-c_2r)$ for some constants $c_1,c_2>0$, see
for instance \cite{MR708367} and \cite[Section 2.5]{MR2002j:60002}.

Both Poincar\'e and logarithmic Sobolev inequalities are invariant by the
action of the translation group and the orthogonal group. More generally, let
us denote by $f\cdot\mu$ the image measure of $\mu$ by the map $f$. Both
\eqref{eq:def-PI} and \eqref{eq:def-GI} are stable by Lipschitz maps in the
sense that $C_\textsc{PI}(f\cdot\mu)\leq \NRM{f}^2_\textsc{Lip}
C_\textsc{PI}(\mu)$ and $C_\textsc{GI}(f\cdot\mu)\leq \NRM{f}^2_\textsc{Lip}
C_\textsc{GI}(\mu)$. On the real line, $C_\textsc{PI}$ and $C_\textsc{GI}$ can
be controlled via ``simple'' variational bounds such as \eqref{eq:hardy}. Both
\eqref{eq:def-PI} and \eqref{eq:def-GI} are also stable by bounded
perturbations on the log-density of $\mu$, see \cite{MR893137},
\cite{MR1936110}, and \cite{MR2317340} for further details. In view of
sub-exponential or sub-Gaussian concentration bounds, the main advantage of
\eqref{eq:def-PI} and \eqref{eq:def-GI} over a direct approach based on
$\al_\mu$ or $\be_\mu$ lies in the stability by tensor products of
\eqref{eq:def-PI} and \eqref{eq:def-GI}, see e.g. \cite[Chapters 1 and
3]{MR2002g:46132}, \cite{bolley-villani}, and \cite{gozlan-t2}.

\textbf{The case of mixtures.} The integral criterion \eqref{eq:int-expsq}
shows that if the components of a mixture satisfies uniformly a sub-Gaussian
concentration of measure for Lipschitz functions, and if the mixing law has
compact support, then the mixture also satisfies a sub-Gaussian concentration
of measure for Lipschitz functions. Such bounds appear for instance in
\cite{MR2094433}. However, this observation does not give any fine
quantitative estimate on the dependency over the weights for a finite mixture.
Regarding Poincar\'e and logarithmic Sobolev inequalities, it is clear that a
finite mixture of Gaussians will satisfies such inequalities since its
log-density is a bounded perturbation of a uniformly concave function. Here
again, this does not give any fine control on the constants.

An upper bound for the Poincar\'e constant of univariate finite Gaussian
mixture was provided by Johnson \cite[Theorem 1.1 and Section 2]{MR2141356}.
Unfortunately, this upper bound blows up when the minimum weight of the mixing
law goes to $0$. A more general upper bound for finite mixtures of overlapping
densities was obtained by Madras and Randall \cite[Theorem 1.2 and Section
5]{MR1910641}. Here again, the bound blows up when the minimum weight of the
mixing law goes to $0$. Some aspects of Poisson mixtures are considered by
Kontoyannis and Madiman \cite{MR2219345,Kont-Madi} in connection with compound
Poisson processes and discrete modified logarithmic Sobolev inequalities.

\textbf{Outline of the article.} Recall that the aim of the present work is to
study fine properties of mixture of law with respect to concentration of
measure and Sobolev type functional inequalities. The analysis of various
elementary examples shows actually that such a general objective is very
ambitious. Also, we decided to focus in the present work on more tractable
situations. Section \ref{se:laplace} provides 
Laplace bounds for Lipschitz functions in the case of generic mixtures. These
upper bounds on $\al_\mu$ (and thus $\be_\mu$) for a mixture $\mu$ involve the
$W_1$-diameter (see Section \ref{se:laplace} for a precise definition) of the
mixed family. Section \ref{se:conc-two-comp} is devoted to upper bounds on
$\al_\mu$ for two-component mixtures $\mu=\mu_p=p\mu_1+q\mu_0$. Our result is
mainly based on a Laplace-Lipschitz counterpart of the optimal logarithmic
Sobolev inequality for asymmetric Bernoulli measures. In particular, we show
that if $\mu_0$ and $\mu_1$ satisfy a sub-Gaussian concentration for Lipschitz
functions, then it is also the case for the mixture $\mu_p$, with a quite
satisfactory and intuitive behavior as $\min(p,q)$ goes to $0$. In Section
\ref{se:PI-GI-two-comp}, we study Poincar\'e and logarithmic Sobolev
inequalities for two components mixtures. A decomposition of variance and
entropy allows to reduce the problem to the Poincar\'e and logarithmic Sobolev
inequalities for each component, to discrete inequalities for the Bernoulli
mixing law $p\de_1+q\de_0$, and to the control of a mean-difference term. This
last term can be controlled in turn by using some support-constrained
transportation, leading to very interesting open questions in dimension $>1$.
The Poincar\'e constant of the two-component mixture can remain bounded as
$\min(p,q)$ goes to $0$, while the logarithmic Sobolev constant may
surprisingly blow up at speed $-\log(\min(p,q))$. This counter-intuitive
result shows that as far as mixture of laws are concerned, the logarithmic
Sobolev inequality does not behave like the sub-Gaussian concentration for
Lipschitz functions. We also illustrate our results on a gallery of concrete
two-component mixtures. In particular, we show that the blow up of the
logarithmic Sobolev constant as $\min(p,q)$ goes to $0$ is not necessarily
related to support problems.

\textbf{Open problems.} The study of Poincar\'e and logarithmic Sobolev
inequalities for multivariate or non-finite mixtures is an interesting open
problem, for which we give some clues at the end of Section
\ref{se:PI-GI-two-comp} in terms of support-constrained transportation
interpolation. There is maybe a link with the decomposition approach used in
\cite{MR2099650} for Markov chains. One can also explore the tensor products
of mixtures, which are again mixtures. Another interesting problem is the
development of a direct approach for transportation cost and
measure-capacities inequalities (see \cite{br}) for mixtures, even in the
finite univariate case.

\section{General Laplace bounds for Lipschitz functions}
\label{se:laplace}

Intuitively, the concentration of measure of a finite mixture may be
controlled by the worst concentration of the components and some sort of
diameter of the mixed family. We shall confirm, extend, and illustrate this
intuition for a non necessarily finite mixture. The notion of diameter that we
shall use is related to coupling and transportation cost. Recall that for
every $k\geq1$, the Wasserstein (or transportation cost) distance of order $k$
between two laws $\mu_1$ and $\mu_2$ on $\dR^d$ is defined by (see e.g.
\cite{villani-livre,villani-saint-flour} and \cite{MR1105086,MR530375})
\begin{equation}\label{eq:def-wk}
  W_k(\mu_1,\mu_2) %
  = \inf_\pi\PAR{\int_{\dR^d\times\dR^d}\!|x-y|^k\,d\pi(x,y)}^{k^{-1}}
\end{equation}
where $\pi$ runs over the set of laws on $\dR^d\times\dR^d$ with marginals
$\mu_1$ and $\mu_2$. The $W_k$-convergence is equivalent to the weak
convergence together with the convergence of moments up to order $k$. In
dimension $d=1$, we have, by denoting $F_1$ and $F_2$ the cumulative
distribution functions of $\mu_1$ and $\mu_2$, with generalized inverses
$F_1^{-1}$ and $F_2^{-1}$, for every $k\geq1$,
\begin{equation}\label{eq:W-dim1}
  W_k(\mu_1,\mu_2)^k=\!\int_0^1\!\ABS{F_1^{-1}(x)-F_2^{-1}(x)}^k\,dx %
  \ \ \text{and}\ \ %
  W_1(\mu_1,\mu_2) = \!\int_{\dR}\!\ABS{F_1(x)-F_2(x)}\,dx
\end{equation}
where the last expression of $W_1$ follows from the Kantorovich-Rubinstein
dual formulation
\begin{equation}\label{eq:kr-w1}
  W_1(\mu_1,\mu_2) %
  =\sup_{\LIP{f}\leq 1}\PAR{\int_{\dR^d}\!f\,d\mu_1-\int_{\dR^d}\!f\,d\mu_2}.
\end{equation}
Note that if $\mu_1$ does not give mass to points then $\mu_2=(F_2^{-1}\circ
F_1)\cdot\mu_1$. The transportation cost distances lead to the so called
\emph{transportation cost inequalities}, popularized by Marton
\cite{MR838213,MR1404531}, Talagrand \cite{MR1392331}, and Bobkov \& G\"otze
\cite{MR1682772}. See for instance the books
\cite{ledoux-ams,villani-livre,villani-saint-flour} for a review. The link
with concentration of measure was recently deeply explored by Gozlan, see e.g.
\cite{gozlan-t2}. We will not use this interesting line of research in the
present paper.

\begin{thm}[General Laplace-Lipschitz bound via diameter]\label{th:borne}
  Let $\mu=\cM(\nu,{(\mu_\theta)}_{\theta\in\Theta})$ be a general mixture. If
  this mixture satisfies the uniform bounds
  $$
  \overline \alpha =\sup_{\theta\in\Theta}\alpha_\theta<\infty
  \quad\text{and}\quad
  \overline W
  =\sup_{\theta,\theta'\in\Theta}W_1(\mu_\theta,\mu_{\theta'})<\infty
  $$
  then for every $\lambda >0$ we have
  $$
  \alpha_\mu(\lambda) %
  \leq \overline \alpha(\lambda)%
  +\frac{1}{8}\min\PAR{8\overline W\lambda,\overline{W}^2\lambda^2}.
  $$  
\end{thm}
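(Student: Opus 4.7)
The plan is to decompose the Laplace transform under the mixture by conditioning on the mixing parameter, reducing the problem to a component estimate (controlled by $\overline\alpha$) times a mean-difference estimate (controlled by $\overline W$ via Kantorovich--Rubinstein).

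Fix a $1$-Lipschitz $f:\dR^d\to\dR$. I will write $g(\te)=\moyf{\mu_\te}{f}$, so that by Fubini $\moyf{\mu}{f}=\moyf{\nu}{g}$, and
$$
\moyf{\mu}{e^{\la f}}
=\int_\Te\!\moyf{\mu_\te}{e^{\la f}}\,d\nu(\te)
=\int_\Te\! e^{\la g(\te)}\moyf{\mu_\te}{e^{\la(f-g(\te))}}\,d\nu(\te).
$$
Since $f$ is $1$-Lipschitz, the very definition \eqref{eq:def-alpha} of $\al_{\mu_\te}$ yields the pointwise bound $\moyf{\mu_\te}{e^{\la(f-g(\te))}}\leq e^{\al_{\mu_\te}(\la)}\leq e^{\OL\al(\la)}$. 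Consequently, after dividing by $e^{\la\moyf{\nu}{g}}$,
$$
\moyf{\mu}{e^{\la(f-\moyf{\mu}{f})}}
\leq e^{\OL\al(\la)}\,\moyf{\nu}{e^{\la(g-\moyf{\nu}{g})}}.
$$

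The second factor is where $\overline W$ enters. By the Kantorovich--Rubinstein duality \eqref{eq:kr-w1} applied to the $1$-Lipschitz function $f$,
$$
|g(\te)-g(\te')|
=\ABS{\moyf{\mu_\te}{f}-\moyf{\mu_{\te'}}{f}}
\leq W_1(\mu_\te,\mu_{\te'})\leq \OL W,
$$
so the oscillation of $g$ on $\Te$ is at most $\OL W$. This produces the two bounds we need on $\Psi(\la):=\log\moyf{\nu}{e^{\la(g-\moyf{\nu}{g})}}$. First, since $|g-\moyf{\nu}{g}|\leq\OL W$ almost surely under $\nu$, one has the trivial $\Psi(\la)\leq\la\OL W$. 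Second, Hoeffding's lemma applied to the centered, $\OL W$-ranged random variable $g-\moyf{\nu}{g}$ under $\nu$ gives $\Psi(\la)\leq\frac{1}{8}\OL W^2\la^2$. Combining,
$$
\Psi(\la)\leq \frac{1}{8}\min\PAR{8\OL W\la,\OL W^2\la^2}.
$$

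Putting the two estimates together and taking the supremum over $1$-Lipschitz $f$ yields the claim. The only non-entirely-mechanical step is the Hoeffding bound, but this is standard once one recognizes that $\OL W$ controls the span of $g$ via duality; the decomposition of the log-Laplace transform is otherwise just conditioning. No delicate obstacle arises; the main conceptual point is to interpret the ``diameter of the mixed family'' as the $L^\infty(\nu)$-oscillation of the map $\te\mapsto\moyf{\mu_\te}{f}$, uniformly in $1$-Lipschitz $f$.
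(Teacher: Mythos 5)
Your proof is correct and follows essentially the same route as the paper: condition on the mixing parameter to split the log-Laplace into a component term (controlled by $\overline\alpha$) and a mixing term involving $\theta\mapsto\bE_{\mu_\theta}f-\bE_\mu f$, then bound the latter linearly via Kantorovich--Rubinstein and quadratically via Hoeffding's lemma applied to that bounded, centered random variable. The only cosmetic difference is that you phrase the linear bound as an $L^\infty$-control on $g-\moyf{\nu}{g}$ whereas the paper bounds $\bE_{\mu_\theta}f-\bE_\mu f$ pointwise by $\overline W$ through an intermediate integral over $\theta'$; these are the same observation.
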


\begin{proof}[Proof of Theorem \ref{th:borne}]
  The key point is that if $\LIP{f}\leq1$ then for every $\lambda>0$,
  \begin{equation}
    \label{eq:genconc}
    \frac{\bE_\mu\PAR{e^{\lambda f}}}{e^{\lambda \bE_\mu f}}%
    = e^{-\lambda
      \bE_\mu f}\int_\Theta\!\bE_{\mu_\theta}\PAR{e^{\lambda
        f}}\,\nu(d\theta) \leq \int_\Theta\! e^{\alpha_\theta(\lambda)+
      \lambda(\bE_{\mu_\theta}f-\bE_\mu f)}\,\nu(d\theta).
  \end{equation}
  As a consequence, we get 
  \begin{equation}\label{eq:majal}
    \alpha_\mu(\lambda) %
    \leq \overline\alpha(\lambda) %
    +\sup_{\LIP{f}\leq1}\log\int_\Theta\!e^{\lambda(\bE_{\mu_\theta}f-\bE_\mu f)}\,\nu(d\theta).
  \end{equation}
  Thanks to the relation \eqref{eq:kr-w1}, we obtain
  \begin{align*}
    \bE_{\mu_\theta}f-\bE_\mu f
    &=\int_{\Theta}\!\PAR{\bE_{\mu_\theta}f-\bE_{\mu_{\theta'}}f}\,\nu(d\theta')\\
    &\leq \int_{\Theta}\!W_1(\mu_\theta,\mu_{\theta'})\,\nu(d\theta')
    \leq \overline W.
  \end{align*}
  This shows that the second term in the right hand side of \eqref{eq:majal}
  is bounded by $\overline W\lambda$. Alternatively, one can use the Hoeffding
  bound \cite{hoeffding} which says that if $X$ is a centered bounded random
  variable with oscillation $c=\sup X-\inf X$ then
  $$ 
  \bE\PAR{ e^{\lambda X}}\leq e^{\frac{1}{8}\lambda^2c^2}.
  $$
  The desired bound in terms of $\overline W^2\lambda^2$ follows by taking
  $X=\bE_{\mu_Y}f-\bE_\mu f$ where $Y\sim\nu$ and noticing that
  $c\leq\sup_{\theta,\theta'}\PAR{\bE_{\mu_\theta}f-\bE_{\mu_{\theta'}}f}=\overline
  W$.
\end{proof}

\begin{xpl}[Finite mixtures]
  For a finite mixture $\mu=p_1\mu_1+\cdots+p_n\mu_n=\cM(\nu,(\mu_i)_{1\leq
    i\leq n})$ where $\nu=p_1\de_1+\cdots+p_n\de_n$, the mixing measure $\nu$
  is supported by a finite set. In that case, Theorem \ref{th:borne} gives an
  immediate Laplace bound, involving the worst bound for the mixture
  components $(\mu_i)_{1\leq i\leq n}$ (this cannot be improved in general).
  However, in Section \ref{se:conc-two-comp}, we provide sharper bounds by
  improving the dependency over $\nu$ in the case where $n=2$.
\end{xpl}

\begin{xpl}[Bounded mixtures of multivariate Gaussians]
  Here $\mu_\theta=\cN(m(\theta),\Gamma(\theta))$ where $m:\Theta\to\dR^d$ and
  $\Gamma:\dR^d\to\cS^+_d$ are two measurable bounded functions and $\cS^+_d$
  is the cone of symmetric nonnegative $d\times d$ matrices. Note that
  $\Gamma(\te)$ is allowed to be singular \emph{i.e.} not of full rank. The
  spectrum of $\Gamma(\te)$ is real and non-negative. If
  $\lambda_1(\theta)\geq\cdots\geq\lambda_d(\theta)$ are the eigenvalues of
  $\Gamma(\theta)$, we define
  $\rho=\sup_{\te\in\Te}\lambda_1(\te)=\sup_{\te\in\Te}\NRM{\Gamma(\te)}_{2\to2}$.
  Now fix some mixing law $\nu$ on $\Theta$ and consider the mixture
  $\mu=\cM(\nu,{(\mu_\theta)}_{\theta\in\Theta})$. Then for every $\lambda>0$,
  $$
  \alpha_\mu(\lambda) %
  \leq \frac{\rho}{2}\lambda ^2 %
  +\frac{1}{8}\min(8\overline W\lambda,\overline{W}^2\lambda^2).
  $$
  One can deduce an upper bound for $\overline{W}$ from the following lemma.
  \end{xpl}

  \begin{lem}[$W_1$-distance of two multivariate Gaussian laws]\label{le:w1g}
    Let $\mu_0=\cN(m(0),\Gamma(0))$ and $\mu_1=\cN(m(1),\Gamma(1))$ be two
    Gaussian laws on $\dR^d$. For $\theta\in\{0,1\}$, we denote by
    $$
    \lambda_1(\theta)\geq\cdots\geq\lambda_d(\theta)
    $$
    the ordered spectrum of $\Gamma(\theta)$ and by ${(v_i{(\theta)})}_{1\leq
      i\leq d}$ an associated orthonormal basis of eigenvectors. Assume,
    without loss of generality, that $v_i{(0)}\cdot v_i{(1)}\geq 0$ for every
    $1\leq i\leq d$ where ``$\cdot$'' stands for the Euclidean scalar product
    of $\dR^d$. Then $W_1(\mu_0,\mu_1)$ is bounded above by
    $$
    \ABS{m(1)-m(0)}+
    \sqrt{\sum_{i=1}^d\BRA{\PAR{\sqrt{\lambda_i{(1)}}-\sqrt{\lambda_i{(0)}}}^2
        +2\sqrt{\lambda_i{(1)}\lambda_i{(0)}}\PAR{1-v_i{(1)}\cdot v_i{(0)}}}}.
    $$
  \end{lem}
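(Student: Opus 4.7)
\medskip

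\noindent\emph{Proof plan.} The strategy is to reduce to the centered case by the triangle inequality and translation invariance of $W_1$, then bound the resulting $W_1$-distance between centered Gaussians by a direct coupling estimated in the $L^2$ sense.

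First I would write, using the triangle inequality for $W_1$ applied to the intermediate law $\cN(m(1),\Ga(0))$,
$$
W_1(\mu_0,\mu_1)\leq W_1\bigl(\cN(m(0),\Ga(0)),\cN(m(1),\Ga(0))\bigr)+W_1\bigl(\cN(m(1),\Ga(0)),\cN(m(1),\Ga(1))\bigr).
$$
Since $W_1$ is invariant under common translations of both marginals, the first term equals $|m(1)-m(0)|$ (use the deterministic coupling shifting by $m(1)-m(0)$, and the dual formulation \eqref{eq:kr-w1} to get the matching lower bound). The second term equals $W_1(\TL\mu_0,\TL\mu_1)$ where $\TL\mu_\te=\cN(0,\Ga(\te))$. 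By Cauchy--Schwarz (or Jensen), $W_1\leq W_2$, so it suffices to produce a coupling of $\TL\mu_0$ and $\TL\mu_1$ whose $L^2$-cost equals the quantity under the square root.

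The key step is to exhibit such a coupling from a single standard Gaussian source. Let $U_\te$ be the orthogonal matrix whose columns are $v_1(\te),\ldots,v_d(\te)$ and $D(\te)=\mathrm{diag}(\la_1(\te),\ldots,\la_d(\te))$, so that $\Ga(\te)^{1/2}=U_\te D(\te)^{1/2}U_\te^\top$. Draw $Z\sim\cN(0,\bI_d)$ and set
$$
X_\te=U_\te D(\te)^{1/2}Z,\qquad \te\in\{0,1\}.
$$
Then $X_\te\sim\TL\mu_\te$, and expanding
$$
\bE\bigl|X_1-X_0\bigr|^2=\mathrm{tr}\bigl(\Ga(0)\bigr)+\mathrm{tr}\bigl(\Ga(1)\bigr)-2\,\mathrm{tr}\bigl(D(0)^{1/2}D(1)^{1/2}U_1^\top U_0\bigr)
$$
makes the cross term a sum over the diagonal, since $D(0)^{1/2}D(1)^{1/2}$ is diagonal. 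Using $(U_1^\top U_0)_{ii}=v_i(1)\cdot v_i(0)$, this rewrites as
$$
\sum_{i=1}^d\BRA{\bigl(\sqrt{\la_i(1)}-\sqrt{\la_i(0)}\bigr)^2+2\sqrt{\la_i(0)\la_i(1)}\bigl(1-v_i(1)\cdot v_i(0)\bigr)}.
$$
The assumption $v_i(0)\cdot v_i(1)\geq 0$ is what makes each summand automatically nonnegative, which is consistent with the $L^2$-cost interpretation; it is also precisely the sign choice that makes the explicit coupling above come close to optimal. Taking the square root bounds $W_2(\TL\mu_0,\TL\mu_1)$ and hence $W_1(\TL\mu_0,\TL\mu_1)$.

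There is essentially no obstacle once the right coupling is chosen: the only delicate point is realizing that one should pair the eigen-directions of $\Ga(0)$ and $\Ga(1)$ in the prescribed order $v_i(0)\leftrightarrow v_i(1)$ (with the stated sign convention), rather than applying the classical Bures/Gelbrich formula $\mathrm{tr}(\Ga(0)+\Ga(1)-2(\Ga(0)^{1/2}\Ga(1)\Ga(0)^{1/2})^{1/2})$, which would give the optimal $W_2$ but would not display the eigenvector misalignment term $1-v_i(1)\cdot v_i(0)$ that appears in the statement. The chosen suboptimal coupling is sharper than needed for $W_2$ to be optimal but is sufficient to yield the stated explicit geometric bound on $W_1$.
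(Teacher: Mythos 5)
Your proposal is correct and takes essentially the same approach as the paper: triangle inequality through the intermediate law $\cN(m(1),\Gamma(0))$, then the coupling $X_\theta=U_\theta D(\theta)^{1/2}Z$ from a single standard Gaussian (identical to the paper's $X_\theta=\sum_i Y_i\sqrt{\lambda_i(\theta)}\,v_i(\theta)$), followed by $W_1\leq\sqrt{\dE|X_1-X_0|^2}$ and the same algebra, carried out in trace notation rather than by expanding the sum. Only a side remark is slightly off: each summand is nonnegative regardless of the assumption, being $|\sqrt{\lambda_i(1)}v_i(1)-\sqrt{\lambda_i(0)}v_i(0)|^2$; the condition $v_i(0)\cdot v_i(1)\geq0$ is merely the (without loss of generality) sign normalization that makes this coupling, and hence the bound, sharper.
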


  The reader may find in \cite[Theorem 3.2]{takatsu} a formula in the same
  spirit for $W_2(\mu_0,\mu_1)$.

  \begin{proof}[Proof of Lemma \ref{le:w1g}]
    The triangle inequality for the $W_1$ distance gives
    \begin{align*}
      W_1(\mu_0,\mu_1)
      &\leq W_1(\mu_0,\cN(m(1),\Gamma(0))) + W_1(\cN(m(1),\Gamma(0)),\mu_1)\\
      &\leq \ABS{m(1)-m(0)}+W_1(\cN(0,\Gamma(0)),\cN(0,\Gamma(1))).
    \end{align*}
    Now, if ${(Y_i)}_{1\leq i\leq d}$ are i.i.d.\ real random variables of law
    $\cN(0,1)$ then the law of
    $$
    X_\theta=\sum_{i=1}^d Y_i\sqrt{\lambda_i{(\theta)}}v_i{(\theta)}
    $$
    is $\cN(0,\Gamma(\theta))$ for $\theta\in\{0,1\}$. Moreover, from
    \eqref{eq:def-wk} and Jensen's inequality, we get
    $$
    W_1(\cN(0,\Gamma(0)),\cN(0,\Gamma(1)))^2
    \leq \PAR{\dE\ABS{X_1-X_0}}^2
    \leq \dE(\ABS{X_1-X_0}^2).
    $$
    At this step, we note that
    \begin{align*}
      \ABS{X_1-X_0}^2=
      &\sum_{i=1}^dY_i^2\ABS{\sqrt{\lambda_i(1)}v_i(1)-\sqrt{\lambda_i(0)}v_i(0)}^2\\
      &+2\sum_{i<j}Y_iY_j\PAR{\sqrt{\lambda_i(1)}v_i(1)-\sqrt{\lambda_i(0)}v_i(0)}\cdot\PAR{\sqrt{\lambda_i(1)}v_i(1)-\sqrt{\lambda_i(0)}v_i(0)}.
    \end{align*}
    Since ${(Y_i)}$ are i.i.d.\ $\cN(0,1)$ and ${(v_i(\theta))}_{1\leq i\leq
      d}$ is orthonormal for $\theta\in\{0,1\}$, one has
    \begin{align*}
      \dE(\ABS{X_1-X_0}^2)
      &=\sum_{i=1}^d\ABS{\sqrt{\lambda_i(1)}v_i(1)-
        \sqrt{\lambda_i(0)}v_i(0) }^2\\
      &=\sum_{i=1}^d\BRA{\PAR{\sqrt{\lambda_i{(1)}}-\sqrt{\lambda_i{(0)}}}^2
        +2\sqrt{\lambda_i{(1)}\lambda_i{(0)}}\PAR{1-v_i{(1)}\cdot v_i{(0)}}}.
    \end{align*}
  \end{proof}

Of course the assumptions of Theorem \ref{th:borne} may be relaxed. Instead of
trying to deal with generic abstract results, let us provide some highlighting
examples.

\begin{xpl}[Gaussian mixture of translated Gaussians]
  Here $\Theta=\dR$ and $\mu_\theta=\cN(\theta,\sigma^2)$ for some fixed
  $\sigma>0$, and the mixing law is also Gaussian $\nu=\cN(0,\tau^2)$ for some
  fixed $\tau>0$. In this case, $\overline
  \alpha(\lambda)=\frac{1}{2}\sigma^2\lambda^2$ but $\overline W$ is infinite
  since 
  $$
  W_1(\mu_\theta,\mu_{\theta'})=\ABS{\theta-\theta'}.
  $$
  In particular, Theorem \ref{th:borne} is useless. Nevertheless, the function
  $$
  \theta\mapsto g(\theta)=\bE_{\mu_\theta} f-\bE_\mu f
  $$
  is Lipschitz since
  $$
  \ABS{g(\theta)-g(\theta')}\leq \bE\PAR{\ABS{f(X+\theta)-f(X+\theta')}} %
  \leq \ABS{\theta-\theta'}
  $$
  where $X\sim\cN(0,1)$. As a consequence, we get
  $$
  \sup_{\LIP{f}\leq 1}\log%
  \int_\Theta\!e^{\lambda(\bE_{\mu_\theta} f-\bE_\mu f)}\,\nu(d\theta)%
  \leq \frac{\tau^2\lambda^2}{2},
  $$
  and for any $\lambda>0$
  $$
  \alpha_\mu(\lambda)\leq \frac{\sigma^2+\tau^2}{2}\lambda^2.
  $$
  The same argument may be used more generally for ``position'' mixtures. For
  instance if $\eta$ is some fixed probability measure on $\dR^d$ and
  $\mu_\theta=\eta*\delta_\theta$ for $\theta\in\dR^d$ then $\forall
  \lambda>0$,
  $$
  \alpha_\mu(\lambda)\leq \alpha_\eta(\lambda)+\alpha_\mu(\lambda).
  $$
  In this particular case, $\mu=\nu*\eta$ and the bound above follows also by
  tensorization!
\end{xpl}

\begin{xpl}[Mixture of scaled Gaussians: from exponential to Gaussian tails]
  Here we take $\Theta=[0,\infty)$ and $\mu_\theta=\cN(0,\theta^2)$ with a
  mixing measure $\nu$ of density 
  $$
  \theta\mapsto \frac{\gamma}{\Gamma\!\PAR{\gamma^{-1}}} %
  \exp\PAR{-\theta^{\gamma}}\mathds{1}_{[0,\infty)}(\theta)
  $$
  where $\gamma\geq2$ is some fixed real number. Note that $\nu$ has a
  non-compact support and that $\mu$ does not satisfy the integral criterion
  \eqref{eq:int-expsq}. 
  This means that $\mu$ cannot have sub-Gaussian tails. Note also that both
  $\overline \alpha(\lambda)$ and $\overline W$ are infinite since
  $$
  \alpha_\theta(\lambda)=\frac{\theta^2\lambda^2}{2} %
  \quad\text{and}\quad %
  W_1(\mu_\theta,\mu_{\theta'})=\sqrt{\frac{2}{\pi}}\,\ABS{\theta-\theta'}
  $$
  where we used \eqref{eq:W-dim1} for $W_1$. Starting from \eqref{eq:genconc},
  one has by Cauchy-Schwarz's inequality
  \begin{equation}\label{eq:melvar}
    \PAR{\frac{\bE_\mu\PAR{e^{\lambda f}}}{e^{\lambda \bE_\mu f}}}^2
    \leq \int_\Te\!e^{\theta^2\lambda^2}\,\nu(d\theta)%
         \int_\Te\!e^{2\lambda(\bE_{\mu_\theta} f-\bE_\mu f)}\,\nu(d\theta).
  \end{equation}
  Note that $\nu$ satisfies condition \eqref{eq:int-expsq} and
  $\alpha_\nu(\lambda)\leq C\lambda^2$ for some real constant $C>0$. Here and
  in the sequel, the constant $C$ may vary from line to line and may be chosen
  independent of $\gamma$. On the other hand, the centered function
  $g(\theta)=\bE_{\mu_\theta} f-\bE_\mu f$ is 1-Lipschitz since
  $$
  \ABS{g(\theta)-g(\theta')}=\ABS{\bE f(\theta X)-\bE f(\theta' X)}
  \leq \ABS{\theta-\theta'}\bE(\ABS{X})
  $$
  where $X\sim\cN(0,1)$. Also, for the second term in the right hand side of
  \eqref{eq:melvar} we have
  $$
  \int_\Te\!e^{2\lambda(\bE_{\mu_\theta} f-\bE_\mu f)}\,\nu(d\theta) %
  \leq e^{\alpha_\nu(2\lambda)} %
  \leq e^{4C\lambda^2}.
  $$
  If $\gamma=2$ then $\alpha_\mu(\lambda)\leq
  2C\lambda^2-\frac{1}{4}\log(1-\lambda^2)\leq 2C-\frac{1}{4}\log(1-\lambda)$
  if $\lambda<1$, which gives, after some computations, the deviation bound,
  for some other constants $C'>0$ and $C''>0$,
  $$
  \mu(F-\bE_\mu f\geq r )\leq C'e^{-C''r}.
  $$
  Assume in contrast that $\gamma>2$. Since $
  \theta^2\lambda^2\leq\gamma^{-1}\theta^\gamma
  +C_0\lambda^{\frac{2\gamma}{\gamma-2}}$ for some constant $C_0>0$ which may
  depend on $\gamma$ but not on $\lambda$ and $\theta$, we get, for some
  constants $C_1>0$ and $C_2>0$,
  $$
  \int_0^\infty\!\exp\PAR{\theta^2\lambda^2}\,\nu(d\theta) %
  \leq C_1\exp\PAR{C_2\lambda^{\frac{2\gamma}{\gamma-2}}}.
  $$
  This gives
  $\alpha_\mu(\lambda)\leq{}C_3\lambda^{\frac{2\gamma}{\gamma-2}}+C_4$ for
  some constants $C_3>0$ and $C_4>0$, which yields a deviation bound of the
  form (for some constants $C_5>0$ and $C_6>0$)
  $$
  \mu(f-\bE_\mu f\geq r)\leq %
  C_5\exp\PAR{-C_6r^{2-\frac{4}{\gamma+2}}}.
  $$
  Note that $\nu$ goes to the uniform law on $[0,1]$ as $\gamma\to\infty$ and
  the Gaussian tail reappears.
\end{xpl}

\section{Concentration bounds for two-component mixtures}
\label{se:conc-two-comp}

In this section, we investigate the special case where the mixing measure
$\nu$ is the Bernoulli measure $\cB(p)=p\delta_1+q\delta_0$ where $q=1-p$. We
are interested in the study of the sharp dependence of the concentration
bounds on $p$, especially when $p$ is close to $0$ or $1$.

\begin{thm}[Two-component mixture]\label{th:two-mix-lap}
  Let $\mu_0$ and $\mu_1$ be two probability measures on $\cX$ and
  $\mu=p\mu_1+q\mu_0$ with $p\in[0,1]$ and $q=1-p$. Define 
  $x_p=\max(p,q)/(2c_p)$ where
  $$
  c_p=\frac{q-p}{4(\log(q)-\log(p))}
  $$
  with the continuity conventions $c_{1/2}=1/8$ and $c_0=c_1=0$. Then for any
  $\la>0$,
  $$
  \al_{\mu}(\la)
  \leq \max(\al_{\mu_0},\al_{\mu_1})(\la)
  +\begin{cases}
    \ds{c_p\la^2W_1(\mu_0,\mu_1)^2} &\text{if $\la W_1(\mu_0,\mu_1)\leq
      x_p$}\\
    \\
    \ds{\max(p,q)\PAR{\la W_1(\mu_0,\mu_1)-\frac{1}{2}x_p}} & \text{otherwise}.
  \end{cases}
  $$
\end{thm}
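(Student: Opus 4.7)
The plan is to reduce the Laplace--Lipschitz bound for $\mu$ to a sharp bound on the log-Laplace transform of the centered Bernoulli mixing law $\cB(p)$. Fix a $1$-Lipschitz $f:\cX\to\dR$ and start from the identity
\[
\frac{\bE_\mu e^{\la f}}{e^{\la\bE_\mu f}}
=\frac{p\,\bE_{\mu_1}e^{\la f}+q\,\bE_{\mu_0}e^{\la f}}{e^{\la(p\bE_{\mu_1}f+q\bE_{\mu_0}f)}}.
\]
Bounding each component expectation through $\bE_{\mu_i}e^{\la f}\leq e^{\OL{\al}(\la)+\la\bE_{\mu_i}f}$, where $\OL{\al}:=\max(\al_{\mu_0},\al_{\mu_1})$, factors out $e^{\OL{\al}(\la)}$ and leaves the centered Bernoulli Laplace transform
\[
\bE_{\cB(p)}\bigl(e^{\la a(Y-\bE Y)}\bigr),\qquad a:=\bE_{\mu_1}f-\bE_{\mu_0}f,\quad Y\sim\cB(p).
\]
By the Kantorovich--Rubinstein duality \eqref{eq:kr-w1}, $|a|\leq W_1(\mu_0,\mu_1)$, so it suffices to produce a sharp upper bound on $\psi_p(t):=\log\bE_{\cB(p)}(e^{t(Y-\bE Y)})$ valid for every $|t|\leq\la W_1(\mu_0,\mu_1)$.

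The heart of the proof is then the two-point Laplace--Lipschitz estimate
\[
\psi_p(t)\leq
\begin{cases}
c_p\,t^2 & \text{if }|t|\leq x_p,\\
\max(p,q)\bigl(|t|-\tfrac{1}{2}x_p\bigr) & \text{if }|t|>x_p,
\end{cases}
\]
which is the announced Laplace--Lipschitz counterpart of the optimal logarithmic Sobolev inequality for $\cB(p)$. For the quadratic piece, the sharp log-Sobolev constant of $\cB(p)$ (the classical Diaconis--Saloff-Coste / Bobkov--Tillich value, equal to $4c_p$ in the normalization of \eqref{eq:def-GI}) combined with the Herbst argument on the two-point space $\{0,1\}$ applied to the $1$-Lipschitz coordinate $y\mapsto y$ yields $\psi_p(t)\leq c_p t^2$ for every real $t$. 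For the linear piece on the positive axis, convexity of $\psi_p$ together with the asymptotic slope $\psi_p'(s)\to q$ as $s\to+\infty$ forces $\psi_p'(s)\leq q\leq\max(p,q)$ for every $s\geq0$; integrating from $x_p$ to $t\geq x_p$ and combining with the quadratic bound at $x_p$ produces
\[
\psi_p(t)\leq c_p x_p^2+\max(p,q)(t-x_p)=\max(p,q)\bigl(t-\tfrac{1}{2}x_p\bigr),
\]
the last equality being precisely the matching relation $c_p x_p=\max(p,q)/2$ built into the definition $x_p=\max(p,q)/(2c_p)$. The negative axis is handled symmetrically using $|\psi_p'(s)|\leq p\leq\max(p,q)$ for $s\leq 0$, which is exactly why the final bound involves $|t|$ and $\max(p,q)$ irrespective of the sign of $a$.

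Substituting $|t|\leq\la W_1(\mu_0,\mu_1)$ into the two-point estimate and multiplying through by $e^{\OL{\al}(\la)}$ then yields the announced bound on $\al_\mu(\la)$. The main obstacle is the quadratic piece of the two-point inequality: it requires the sharp logarithmic Sobolev constant of the asymmetric Bernoulli measure and a Herbst-type computation on the two-point space whose normalization must be tuned to produce exactly the logarithmic-mean constant $c_p=(q-p)/(4(\log q-\log p))$. Once that precise constant is available, the passage to the linear regime and the tangency at the threshold $x_p$ are essentially forced by convexity of $\psi_p$ and the matching condition $c_p x_p=\max(p,q)/2$, so the remainder of the argument reduces to routine bookkeeping.
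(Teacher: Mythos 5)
Your reduction to the Bernoulli mixing law is exactly the paper's: conditioning on the component factors out $\max(\al_{\mu_0},\al_{\mu_1})$, Kantorovich--Rubinstein bounds $|a|=|\bE_{\mu_1}f-\bE_{\mu_0}f|$ by $W_1(\mu_0,\mu_1)$, and your handling of the linear regime (slopes of $\psi_p$ bounded by $q$ and $p$ on the two half-lines, tangency at $x_p$ via $c_px_p^2=\max(p,q)x_p/2$) coincides with the paper's tangent-line argument. The gap is in the step you yourself call the heart of the proof: the quadratic estimate $\psi_p(t)=\log\PAR{pe^{qt}+qe^{-pt}}\le c_pt^2$. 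You assert it follows from the sharp logarithmic Sobolev inequality for $\cB(p)$ ``combined with the Herbst argument on the two-point space''. This does not work. First, the sharp two-point constant with respect to the discrete gradient $(h(1)-h(0))^2$ is $pq(\log q-\log p)/(q-p)=pq/(4c_p)$ (Lemma \ref{le:gi-bern}), not $4c_p$; the two agree only at $p=1/2$, so the normalization you invoke is precisely the one that makes the answer come out, and it is not the correct constant. Second, and more fundamentally, Herbst requires the chain rule $|\nabla e^{\la f/2}|^2\le\tfrac{\la^2}{4}e^{\la f}|\nabla f|^2$, which fails for a discrete gradient. If one runs Herbst anyway with the correct constant $pq/(4c_p)$, one obtains $\al_{\cB(p)}(\la)\le\tfrac{pq}{16c_p}\la^2$; since the logarithmic mean $4c_p=(q-p)/(\log q-\log p)$ strictly exceeds the geometric mean $\sqrt{pq}$ for $p\neq q$, this would be strictly better than the optimal constant $c_p$ of Lemma \ref{le:cp}, i.e.\ the chain-rule Herbst proves a false statement on the two-point space, so the mechanism is unsound rather than merely in need of tuning. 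Concretely, the intermediate inequality one would need,
\begin{equation*}
\frac{pq}{4c_p}\PAR{e^{y/2}-1}^2\le c_p\,y^2\PAR{pe^y+q}\quad\text{for all }y\in\dR,
\end{equation*}
fails near $y=\log(q/p)$, by a factor approaching $2$ as $p\to0$.

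So the key Laplace bound for the asymmetric Bernoulli law cannot be imported from the log-Sobolev side; it must be proved directly, and that is exactly what the paper's Lemma \ref{le:cp} does: a calculus study of $x\mapsto x^{-2}\log\PAR{pe^{qx}+qe^{-px}}$ showing the supremum is attained at $x=2\log(q/p)$ and equals $c_p$ (the relation with the Bernoulli log-Sobolev constant is recorded there only as a remark, not used in the proof). You also need the two-sided version to cover both signs of $a$; the paper obtains it from $\cosh_p(-x)=\cosh_q(x)$ and $\max(\cosh_p,\cosh_q)=\cosh_{\min(p,q)}$ on $[0,\infty)$, which is what legitimizes your use of $|t|$ with the symmetric constant $c_p$. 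Once Lemma \ref{le:cp} is granted, the rest of your write-up is correct and matches the paper's proof.
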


Note that if $\min(p,q)\to0$, then $c_p\sim-(4\log(p))^{-1}\to0$ and
$x_p\to\infty$, and we thus recover an upper bound of the form $\al_\mu\leq
\max(\al_{\mu_1},\al_{\mu_2})$ as $\min(p,q)\to0$, which is satisfactory. The
two different upper bounds given by Theorem \ref{th:two-mix-lap} provide two
different upper bounds for the concentration of measure of the mixture $\mu$,
illustrated by the following Corollary (the proof of the Corollary is
immediate and is left to the reader).

\begin{cor}[Two-component mixtures with sub-Gaussian tails]\label{co:gaustail}
  Let $\mu_0$ and $\mu_1$ be two probability measures on $\cX$ and
  $\mu=p\mu_1+q\mu_0$ for some $p\in[0,1]$ with $q=1-p$. If there exists a
  real constant $C>0$ such that for any $\la>0$
  $$
  \max(\al_{\mu_0},\al_{\mu_1})(\la)\leq \frac{1}{2}C\la^2
  $$
  then for every $r\geq0$, with $\overline W=W_1(\mu_0,\mu_1)$,
  $$
  \be_\mu(r) \leq
  2
  \begin{cases}
   \ds{\exp\PAR{-\frac{r^2}{2C+4c_p\OL{W}^2}}}
   & \text{if $r\leq \max(p,q)\PAR{\frac{C}{2c_p\OL{W}}+\OL{W}}$}, \\
   & \\
   \ds{\exp\PAR{-\frac{1}{2C}(r-\max(p,q)\OL{W})^2-\frac{\max(p,q)^2}{4c_p}}}
   & \text{otherwise.}
   \end{cases}
  $$
\end{cor}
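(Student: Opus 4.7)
The plan is to deduce the corollary directly from Theorem \ref{th:two-mix-lap} by substituting the sub-Gaussian assumption into the upper bound on $\alpha_\mu$ and then applying the Cram\'er-Chernov-Chebychev bound \eqref{eq:def-beta}. Setting $\OL W = W_1(\mu_0,\mu_1)$ and plugging $\max(\alpha_{\mu_0},\alpha_{\mu_1})(\lambda)\leq \tfrac12 C\lambda^2$ into Theorem \ref{th:two-mix-lap}, I obtain the piecewise bound
\begin{equation*}
\alpha_\mu(\lambda) \leq
\begin{cases}
\tfrac12(C+2c_p\OL W^2)\lambda^2 & \text{if } \lambda\OL W\leq x_p,\\[2pt]
\tfrac12 C\lambda^2 + \max(p,q)\lambda\OL W -\tfrac12\max(p,q)\,x_p & \text{if } \lambda\OL W> x_p.
\end{cases}
\end{equation*}
The Fenchel-Legendre transform appearing in \eqref{eq:def-beta} then splits naturally into two cases, according to whether the unconstrained optimizer of $r\lambda-\alpha_\mu(\lambda)$ lies in the quadratic regime or in the affine-plus-quadratic regime.

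In the first regime, the function $\lambda\mapsto r\lambda-\tfrac12(C+2c_p\OL W^2)\lambda^2$ is maximized at $\lambda^\star=r/(C+2c_p\OL W^2)$, yielding the value $r^2/(2C+4c_p\OL W^2)$. I would next check that this $\lambda^\star$ indeed satisfies $\lambda^\star \OL W\leq x_p=\max(p,q)/(2c_p)$, which is equivalent to
\[
r\leq \max(p,q)\PAR{\tfrac{C}{2c_p\OL W}+\OL W}.
\]
This is precisely the threshold appearing in the corollary, so in this range the first branch of the stated bound for $\be_\mu$ follows immediately.

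In the second regime, for $r$ above that threshold, I use the affine-in-$\lambda$ upper bound on $\alpha_\mu$. The function $\lambda\mapsto(r-\max(p,q)\OL W)\lambda-\tfrac12 C\lambda^2+\tfrac12\max(p,q)x_p$ is maximized at $\lambda^\star=(r-\max(p,q)\OL W)/C\geq 0$, and evaluating gives
\[
\sup_{\lambda>0}(r\lambda-\alpha_\mu(\lambda))\geq \frac{(r-\max(p,q)\OL W)^2}{2C}+\frac{\max(p,q)^2}{4c_p},
\]
using $\tfrac12\max(p,q)x_p=\max(p,q)^2/(4c_p)$. Inserting this into \eqref{eq:def-beta} produces the second branch of the stated bound.

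No step is a real obstacle; the only point that requires a little care is matching the regime threshold $\lambda^\star\OL W\leq x_p$ with the threshold in $r$ given in the statement, and handling the degenerate cases $p\in\{0,1\}$ via the stated conventions (in which $c_p=0$ and the first branch disappears, consistent with $\mu\in\{\mu_0,\mu_1\}$). Apart from this bookkeeping, the argument is a straightforward Legendre transform of the piecewise bound of Theorem \ref{th:two-mix-lap}.
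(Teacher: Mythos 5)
Your proof is correct and is exactly the ``immediate'' argument the paper leaves to the reader: substitute the hypothesis $\max(\al_{\mu_0},\al_{\mu_1})(\la)\leq\frac12 C\la^2$ into Theorem \ref{th:two-mix-lap}, optimize $r\la-\al_\mu(\la)$ in \eqref{eq:def-beta} separately in the two regimes, and verify (as you do) that the optimizer $\la^\star$ lands in the regime matching the stated threshold in $r$. Only a minor slip in your final parenthetical: when $p\in\{0,1\}$ one has $c_p=0$, the threshold becomes $+\infty$, so it is the second (``otherwise'') branch that becomes vacuous, while the \emph{first} branch survives and reduces to $\be_\mu(r)\leq 2\exp\PAR{-r^2/(2C)}$, consistent with $\mu\in\{\mu_0,\mu_1\}$.
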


\begin{proof}[Proof of Theorem \ref{th:two-mix-lap}]
  We have $\mu=q\mu_0+p\mu_1=\cM(\nu,\{\mu_0,\mu_1\})$ where
  $\nu:=q\de_0+p\de_1$. For this finite mixture, we get, as in the general
  case, for any $f\in\mathrm{Lip}(\cX,\dR)$ and $\la>0$,
  $$
  \log\PAR{\frac{\bE_\mu\PAR{e^{\la f}}}{e^{\la\bE_\mu f}}}
  \leq \max(\al_{\mu_0},\al_{\mu_1})(\la)
  +\log\PAR{\frac{\bE_\nu\PAR{e^{\la g}}}{e^{\la\bE_\nu g}}},
  $$
  where $g(i):=\bE_{\mu_i}f$. At this step, we use the particular nature of
  $\nu$, which gives
  $$
  \frac{\bE_\nu\PAR{e^{\la g}}}{e^{\la\bE_\nu g}} %
  =\cosh_p(\la(g(1)-g(0))),
  $$
  where $\cosh_{p}(x):=pe^{qx}+qe^{-px}$. Since $g(1)-g(0) =
  \bE_{\mu_1}f-\bE_{\mu_0}f$, we get by \eqref{eq:kr-w1}
  $$
  -W_1(\mu_0,\mu_1) \leq g(1)-g(0) \leq W_1(\mu_0,\mu_1).
  $$
  Since $\cosh_p(-x)=\cosh_q(x)$ for any $x\in\dR$, we get for any $\la>0$,
  $$
  \sup_{\LIP{f}\leq 1} 
  \PAR{\frac{\bE_\nu\PAR{e^{\la g}}}{e^{\la\bE_\nu g}}} %
  = \max\PAR{\cosh_{p},\cosh_{q}}(\la W_1(\mu_0,\mu_1)).
  $$
  Putting all together, we obtain, for any $\la>0$,
  $$
  \al_\mu(\la) \leq \max(\al_{\mu_0},\al_{\mu_1})(\la) %
  +\log\max\PAR{\cosh_{p},\cosh_{q}}(\la W_1(\mu_0,\mu_1)),
  $$
  Since $(\cosh_q-\cosh_p)'(x)=2pq(\cosh(px)-\cosh(qx))$, one has, for
  every $x\geq 0$,
  $$
  \max\PAR{\cosh_{p},\cosh_{q}}(x)= \cosh_{\min(p,q)}(x).
  $$
  Let us assume that $p\leq q$. Lemma \ref{le:cp} ensures that, for every
  $x\geq 0$, 
  $$
  \log\max\PAR{\cosh_{p},\cosh_{q}}(x)= \log\cosh_{p}(x)\leq c_p x^2.
  $$
  On the other hand,
  $$
  \log\cosh_p(x)=qx+\log\PAR{p+qe^{-x}}\leq qx.
  $$
  Now, for $x=x_p$, the slope of $x\mapsto c_px^2$ is equal to $q$ and the
  tangent is $y=q(x-x_p/2)$. On the other hand, the convexity of
  $x\mapsto\log\cosh_p(x)$ yields $\log\cosh_p(x)\leq q(x-x_p)$ for $x\geq
  x_p$ (drawing a picture may help the reader). The desired conclusion follows
  immediately.
\end{proof}

The proof of Theorem \ref{th:two-mix-lap} relies on Lemma \ref{le:cp} below
which provides a Gaussian bound for the Laplace transform of a Lipschitz
function with respect to a Bernoulli law. This lemma is an optimal version of
the Hoeffding bound \cite{hoeffding} in the case of a Bernoulli law.

\begin{lem}[Two-point lemma]\label{le:cp}
  For any $0\leq p \leq 1/2$, we have
  \begin{equation}
    \label{eq:cp}
    \sup_{x>0} x^{-2}\log(pe^{qx}+qe^{-px})
    =c_p=\frac{q-p}{4(\log(q)-\log(p))}
  \end{equation}
  with the natural conventions $c_0=0$ and $c_{1/2}=1/8$ as in Theorem
  \ref{th:two-mix-lap}. Moreover, the supremum in $x$ is achieved for
  $x=2(\log(q)-\log(p))$.
\end{lem}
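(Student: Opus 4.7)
My plan is to treat the degenerate cases $p=0$ and $p=1/2$ separately and then concentrate on $0<p<1/2$. For $p=0$ the expression reduces to $\log 1=0$, so the supremum is trivially $0=c_0$. For $p=1/2$ I would use that $\psi(x):=\log(pe^{qx}+qe^{-px})$ becomes $\log\cosh(x/2)$, whose second derivative $\tfrac14\operatorname{sech}^2(x/2)$ is bounded by $1/4$; combined with $\psi(0)=\psi'(0)=0$ this yields $\psi(x)\leq x^2/8=c_{1/2}x^2$, with the supremum $1/8$ approached as $x\to 0^+$, consistent with $x^*=0$.

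Assume now $0<p<1/2$, so $x^*:=2(\log q-\log p)>0$. I would first verify the lower bound by a direct algebraic evaluation. Using $e^{qx^*}=(q/p)^{2q}$ and $e^{-px^*}=(p/q)^{2p}$,
$$
pe^{qx^*}+qe^{-px^*}=p(q/p)^{2q}+q(p/q)^{2p}=p^{2p-1}q^{2q-1}(p+q)=p^{2p-1}q^{2q-1},
$$
so $\psi(x^*)=(2p-1)\log p+(2q-1)\log q=(q-p)(\log q-\log p)$ after a short rearrangement. Dividing by $(x^*)^2=4(\log q-\log p)^2$ gives $\psi(x^*)/(x^*)^2=c_p$, which both exhibits a maximizer and records the by-product identity $2c_px^*=q-p$.

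For the matching upper bound I would introduce $\phi(x):=\psi(x)-c_px^2$ and study it via the tilted probability $\tilde p(x):=pe^{qx}/(pe^{qx}+qe^{-px})$, which rewrites as the shifted logistic $\tilde p(x)=1/(1+e^{-(x-x^*/2)})$. Then $\psi'(x)=\tilde p(x)-p$ and $\psi''(x)=\tilde p(x)(1-\tilde p(x))$. From the three specific values $\tilde p(0)=p$, $\tilde p(x^*/2)=1/2$, $\tilde p(x^*)=q$, combined with $2c_px^*=q-p$, one checks immediately that $\phi'$ vanishes at each of $0$, $x^*/2$, and $x^*$.

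The sign of $\phi'$ on the intervals between these zeros will be controlled by $\phi''(x)=\tilde p(x)(1-\tilde p(x))-2c_p$. As a function of $x$, $\tilde p(1-\tilde p)$ is unimodal with maximum $1/4$ at $x=x^*/2$ and value $pq$ at both $x=0$ and $x=x^*$. The two elementary comparisons $pq<2c_p<1/4$ valid for $0<p<1/2$---equivalent, with $s:=\log(q/p)>0$, to $\sinh s>s$ and $s>2\tanh(s/2)$ respectively, both easy by differentiating or Taylor expansion---show that $\phi''$ is strictly positive only on a subinterval $(\xi_1,\xi_2)$ with $0<\xi_1<x^*/2<\xi_2<x^*$ and strictly negative outside. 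Combined with the three known zeros of $\phi'$, this pins down $\phi'<0$ on $(0,x^*/2)$, $\phi'>0$ on $(x^*/2,x^*)$, and $\phi'<0$ on $(x^*,\infty)$. Hence $\phi$ is decreasing--increasing--decreasing on $(0,\infty)$ with $\phi(0)=\phi(x^*)=0$, so $\phi\leq 0$ throughout, which is exactly the required upper bound. The main technical hurdle will be precisely the sandwich $pq<2c_p<1/4$, since it drives the entire sign analysis; once it is in hand the rest is monotonicity bookkeeping.
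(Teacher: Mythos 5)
Your proof is correct, and while it hinges on the same underlying mechanism as the paper's---namely the sign change of $\psi'''$ (equivalently, the unimodality of $\psi''=\tilde p(1-\tilde p)$) at $\log(q/p)$---it is packaged quite differently. The paper studies $\beta(x)=x^{-2}\psi(x)$ directly: it shows that $\gamma(x):=x\psi'(x)-2\psi(x)$, which carries the sign of $\beta'$, has a unique positive zero (using $\gamma''=x\psi'''$ to pin down the shape of $\gamma'$ and hence $\gamma$), and then verifies by explicit algebra that this zero is $2\log(q/p)$, from which $c_p$ drops out. You instead compute $c_p=\psi(x^*)/(x^*)^2$ up front, subtract $c_px^2$ from $\psi$, and show $\phi:=\psi-c_px^2\leq 0$ by locating the three zeros of $\phi'$ at $0$, $x^*/2$, $x^*$ (which the logistic rewrite $\tilde p(x)=(1+e^{-(x-x^*/2)})^{-1}$ and the identity $2c_px^*=q-p$ make transparent) and by controlling the sign of $\phi''=\tilde p(1-\tilde p)-2c_p$ via the sandwich $pq<2c_p<1/4$. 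Your route sidesteps the existence-and-uniqueness argument for an interior maximizer of $\beta$, at the cost of the two elementary hyperbolic inequalities $s<\sinh s$ and $2\tanh(s/2)<s$; the paper's route avoids any auxiliary inequality but must establish separately that $\gamma$ vanishes exactly twice on $[0,\infty)$. Both are clean and essentially equivalent in length.
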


The constant $c_p$ is also equal, as it will appear in the proof, to
$\sup_{\lambda>0}\alpha_{\cB(p)}(\lambda)/\lambda^2$. The classical Hoeffding
bound for this supremum is $c_{1/2}=1/8$ which is the maximum of $c_p$ over
$p$. Additionally, the quantity $1/(4c_p)$ is the optimal constant of the
logarithmic Sobolev inequality for the asymmetric Bernoulli measure
$q\delta_0+p\delta_1$ (see Lemma \ref{le:gi-bern}).

\begin{proof}[Proof of Lemma \ref{le:cp}]
  Let us define $\WH{x}_p=\log(q/p)$ and $\beta(x)=x^{-2}\psi(x)$ where
  $$
  \psi(x)=\log(pe^{qx}+qe^{-px}).
  $$
  The function $\psi$ is ``strongly convex'' at the origin
  ($\psi(0)=\psi'(0)=0$ and $\psi''(0)=pq$ and $\psi'''(0)>0$) and linear at
  infinity ($\psi'(\infty)=q$). Therefore, the supremum of $\beta$ is achieved
  for some $x>0$. The derivative of $\beta$ has the sign of
  $\gamma(x):=x\psi'(x)-2\psi(x)$. Furthermore,
  $$
  \gamma'(x)=x\psi''(x)-\psi'(x) \quad\text{and}\quad
  \gamma''(x)=x\psi'''(x).
  $$
  As a consequence, $\gamma''$ has the sign of $\psi'''$ which is positive on
  $(0,\WH{x}_p)$ and negative on $(\WH{x}_p,+\infty)$. Since $\gamma'(0)=0$
  and $\gamma'$ achieves its maximum for $x=\WH{x}_p$ and $\gamma'$ goes to
  $-q$ at infinity and there exists an unique $y_p>0$ (in fact $y_p>\WH{x}_p$)
  such that $\gamma'(y_p)=0$. As a conclusion, since $\gamma(0)=0$ and
  $\gamma$ is increasing on $(0,y_p)$ and $\gamma(x)$ goes to $-\infty$ as $x$
  goes to infinity, $\gamma(x)$ is equal to zero exactly two times: for $x=0$
  and $x=z_p>y_p>\WH{x}_p$ In fact, $z_p$ is equal to $2\WH{x}_p$. Indeed, we
  have
  $$
  \psi'(x)=pq\frac{e^{qx}-e^{-px}}{pe^{qx}+qe^{-px}}.
  $$
  Now, we compute
  $$
  \psi'(2\WH{x}_p) =pq\frac{(q/p)^{2q}-(p/q)^{2p}}{p(q/p)^{2q}+q(p/q)^{2p}}
  =\cdots=q^2-p^2=q-p,
  $$
  and
  \begin{align*}
    2\psi(2\WH{x}_p)
    &=2\log(p(q/p)^{2q}+q(p/q)^{2p}) \\
    &=2\log((q+p)(q/p)^{q-p}) \\
    &=2\WH{x}_p\psi'(2\WH{x}_p).
  \end{align*}
  Thus, $2\WH{x}_p$ is (the unique positive) solution of $2\psi(x)=x\psi'(x)$.
  As a conclusion, we get $c_p=\psi(2\WH{x}_p)/(4\WH{x}_p^2)$, which gives the
  desired formula after some algebra.
\end{proof}

\begin{rem}[Advantage of direct Laplace bounds]\label{rm:advlap} 
  Consider a mixture $\mu=p\mu_1+q\mu_0$ of two Gaussian laws $\mu_0$ and
  $\mu_1$ on $\dR$ with same variance $\si^2$ and different means. Corollary
  \ref{co:gaustail} ensures that for every $r\geq0$,
  $$
  \be_\mu(r) \leq 2\exp\PAR{-\frac{r^2}{2\si^2+4c_pW_1(\mu_0,\mu_1)^2}}.
  $$
  This bound remains relevant as $\si\to0$ since we recover the bound for the
  Bernoulli mixing law $\nu=p\de_1+q\de_0$. On the other hand, any
  concentration bound deduced from a logarithmic Sobolev inequality would blow
  up as $\si$ goes to zero, as we shall see in Section
  \ref{se:PI-GI-two-comp}.
\end{rem}

\begin{rem}[Inhomogeneous tails]
  It is satisfactory to recover, when $p$ goes to $0$ (resp. $1$), the
  concentration bound of $\mu_0$ (resp. $\mu_1$) and not only the maximum of
  the bounds of the two components. It is possible to exhibit two regimes,
  corresponding to small and big values of $\lambda$. Assume that
  $\mu_i=\cN(0,\theta_i^2)$ for $i\in\{0,1\}$ with $\theta_1>\theta_0>0$.
  Theorem \ref{th:borne} gives
  $$
  \alpha_\mu(\lambda)\leq
  \frac{\theta_1^2\lambda^2}{2}+(\theta_1-\theta_0)\lambda.
  $$
  On the other hand, one has 
  $$
  \log\frac{\bE_\mu\PAR{e^{\la f}}}{e^{\bE_\mu\PAR{\la f}}}
  \leq\int\!\alpha_{\mu_\theta}(\lambda)\,\nu(d\theta) +
  \log\int\!e^{H_\lambda(\theta)+\lambda g(\theta)}\,\nu(d\theta), 
  $$
  where 
  $$
  H_\lambda(\theta)%
  =\alpha_{\mu_\theta}(\lambda)-\int\!\alpha_{\mu_{\theta'}}(\lambda)\,\nu(d\theta')%
  \quad\text{and}\quad%
  g(\theta)=\bE_{\mu_\theta}f-\bE_{\mu}f.
  $$
  Then, Lemma \ref{le:cp} ensures that for every $\varepsilon>0$,
  \begin{align*}
    \log\int\!e^{H_\lambda(\theta)+\lambda g(\theta)}\,\nu(d\theta)
    &\leq c_p \PAR{H_\lambda(1)+\lambda g(1)-H_\lambda(0)-\lambda g(0)}^2\\
    &\leq c_p \PAR{\frac{1}{\varepsilon}\ABS{H_\lambda(1)-H_\lambda(0)}^2%
      +\varepsilon\ABS{\lambda g(1)-\lambda g(0)}^2}.
  \end{align*}
  Choosing $\varepsilon=\lambda$ leads to 
  $$
  \log\int\!e^{H_\lambda(\theta)+\lambda g(\theta)}\,\nu(d\theta) \leq
  c_p\PAR{\frac{(\theta_1^2-\theta_0^2)^2}{4}+(\theta_1-\theta_0)^2}\lambda^3.
  $$ 
  As a conclusion $\alpha_\mu$ can be control by (at least) these two ways:
  $$
  \alpha_\mu(\lambda)\leq
  \begin{cases}
    \ds{\frac{\theta_1^2\lambda^2}{2}+(\theta_1-\theta_0)\lambda},&\\
    \\
    \ds{\frac{p\theta_1^2+q\theta_0^2\lambda^2}{2}+c_p\PAR{\frac{(\theta_1^2-\theta_0^2)^2}{4}+(\theta_1-\theta_0)^2}\lambda^3}.&
  \end{cases}
  $$
  The second one provides sharp bounds for $\lambda\leq f(1/c_p)$ whereas the
  first one is useful for $\lambda\geq f(1/c_p)$ (where $f$ is an increasing
  function which is computable).
\end{rem}

\section{Gross-Poincar\'e inequalities for two-component mixtures}
\label{se:PI-GI-two-comp}

It is known that functional inequalities such as Poincar\'e and (Gross)
logarithmic Sobolev inequalities provide, via Laplace exponential bounds,
dimension free concentration bounds, see for instance \cite{ledoux-ams}. It is
quite natural to ask for such functional inequalities for mixtures. Before
attacking the problem, some facts have to be emphasized.

As already mentioned in the introduction, a law $\mu$ with disconnected
support cannot satisfy a Poincar\'e or a logarithmic Sobolev inequality. In
particular, a mixture of laws with disjoint supports cannot satisfy such
functional inequalities. This observation suggests that in order to obtain a
functional inequality for a mixture, one has probably to control the
considered functional inequality for each component of the mixture and to
ensure that the support of the mixture is connected. It is important to
realize that such a connectivity problem is due to the peculiarities of the
functional inequalities, but does not pose a real problem for the
concentration of measure properties, as suggested by Theorem
\ref{th:two-mix-lap} and Remark \ref{rm:advlap} for instance. In the sequel,
we will focus on the case of two-component mixtures, and try to get sharp
bounds on the Poincar\'e and logarithmic Sobolev constants. The two-component
case is fundamental. The extension of the results to more general finite
mixtures is possible by following roughly the same scheme, see Remark
\ref{rm:fimix} below.

For the logarithmic Sobolev inequality of two-component mixtures, we will make
use of the following optimal two-point Lemma, obtained years ago independently
by Diaconis \& Saloff-Coste and Higushi \& Yoshida. An elementary proof due to
Bobkov is given by Saloff-Coste in his Saint-Flour Lecture Notes
\cite{MR1490046}.

\begin{lem}[Optimal logarithmic Sobolev inequality for Bernoulli measures]
  \label{le:gi-bern}
  For every $p\in(0,1)$ and every $f:\{0,1\}\to\dR$, and with the convention
  $(\log(q)-\log(p))/(q-p)=2$ if $p=q=1/2$, we have
  $$
  \ent{p\delta_1+q\delta_0}{f^2} %
  \leq \frac{\log(q)-\log(p)}{q-p} pq \PAR{f(0)-f(1)}^2.
  $$
  Moreover, the function of $p$ in front of the right hand side cannot be
  improved.
\end{lem}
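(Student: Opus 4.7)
The plan is to reduce by homogeneity and sign considerations to a one-parameter calculus inequality, and then verify it directly. First, both sides of the inequality are homogeneous of degree two under $f\mapsto cf$, so I normalize $\bE_\mu(f^2)=1$. Second, the left-hand side depends only on $f^2$, while $(f(0)-f(1))^2$ is minimal, for given $f(0)^2$ and $f(1)^2$, when $f(0)$ and $f(1)$ share a sign; the extremal case is therefore $f\geq 0$, which I assume from now on.

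Setting $u=f(0)^2$ and $v=f(1)^2$ with $u,v\geq 0$ and $qu+pv=1$, I parametrize by a single scalar $t\in[-1/p,1/q]$ via $(u,v)=(1+pt,1-qt)$. The inequality is then equivalent to $\varphi(t):=K(t)-H(t)\geq 0$ on this interval, where
$$
H(t)=q(1+pt)\log(1+pt)+p(1-qt)\log(1-qt)
$$
and
$$
K(t)=\frac{\log(q)-\log(p)}{q-p}\,pq\,\bigl(\sqrt{1+pt}-\sqrt{1-qt}\bigr)^2.
$$
A direct computation gives $\varphi(0)=\varphi'(0)=0$ and $\varphi''(0)=pq\bigl(\tfrac{\log q-\log p}{2(q-p)}-1\bigr)\geq 0$ by the classical logarithmic-mean inequality $L(p,q)\leq (p+q)/2=1/2$, so the stated constant is correctly calibrated near the origin.

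The global verification on $[-1/p,1/q]$ is the core of the argument. At the endpoints one checks that $\varphi(-1/p)=\varphi(1/q)=\tfrac{q\log q-p\log p}{q-p}\geq 0$, so the inequality holds at the boundary as well. I would then locate the unique interior critical point $t^\star$ of $\varphi$, characterized by a transcendental equation reminiscent of $x=2(\log q-\log p)$ appearing in the proof of Lemma~\ref{le:cp}, and show $\varphi(t^\star)\geq 0$. A useful algebraic simplification is $(\sqrt u-\sqrt v)^2=(u-v)^2/(\sqrt u+\sqrt v)^2$, which clears the square roots and turns $K$ into a rational function times a polynomial; the striking identity $\tfrac{\log q-\log p}{q-p}=\tfrac{1}{4c_p}$ linking the constant here to that of Lemma~\ref{le:cp} also hints at an alternative route via the Bobkov-G\"otze duality between Laplace transforms and logarithmic Sobolev inequalities, but for such a simple space the direct one-variable calculus looks cleanest.

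The main obstacle is precisely this global comparison: since the stated constant is optimal, no slack is available, and a clean monotonicity or convexity structure for $\varphi$ must be extracted across the full interval. Sharpness of the constant then follows by exhibiting the extremizing $t^\star$ (or a saturating sequence in the symmetric case $p=q=1/2$, where equality is approached as $t\to 0$), giving an $f$ that essentially saturates the inequality and rules out any smaller prefactor.
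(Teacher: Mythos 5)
The paper itself does not prove Lemma \ref{le:gi-bern}: it is imported from Diaconis--Saloff-Coste and Higuchi--Yoshida, with Bobkov's elementary proof referenced in Saloff-Coste's Saint-Flour notes, so your argument has to stand entirely on its own. Your reductions are correct and well calibrated: the homogeneity normalization, the reduction to $f\geq 0$ (the entropy only sees $f^2$ while $(f(0)-f(1))^2$ is smallest for equal signs), the parametrization $u=f(0)^2=1+pt$, $v=f(1)^2=1-qt$ on $[-1/p,1/q]$, the values $\varphi(0)=\varphi'(0)=0$ and $\varphi''(0)=pq\bigl(\tfrac{\log q-\log p}{2(q-p)}-1\bigr)\geq 0$ via the logarithmic-mean inequality, and the endpoint identity $\varphi(-1/p)=\varphi(1/q)=\tfrac{q\log q-p\log p}{q-p}\geq 0$ all check out.

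The gap is that the heart of the lemma, namely $\varphi\geq 0$ on all of $[-1/p,1/q]$, is exactly what you leave unproven (``I would then locate the unique interior critical point\ldots''), and the plan you sketch for it cannot work as stated. Equality holds in the lemma at $f(0)^2=p/q$, $f(1)^2=q/p$, i.e.\ at $t^\star=(p-q)/(pq)$, since there $\mathbf{Ent}_{p\delta_1+q\delta_0}(f^2)=(q-p)\log(q/p)$ while the right-hand side equals $(q-p)(\log q-\log p)$ as well. Hence for $p\neq 1/2$ the function $\varphi$ has two distinct interior zeros, at $0$ and at $t^\star$, and if the claimed nonnegativity holds both are local minima, forcing at least three interior critical points; ``the unique interior critical point'' does not exist, and since the inequality is saturated at $t^\star$ there is no slack left for crude bounds near that point. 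A complete proof along your lines must control the full sign pattern of $\varphi'$ (for instance show it vanishes exactly three times, by an auxiliary-function analysis in the spirit of the paper's proof of Lemma \ref{le:cp}) and verify $\varphi(t^\star)=0$ rather than merely $\varphi(t^\star)\geq 0$. Note that identifying $t^\star$ explicitly would at the same time furnish the sharpness statement you only assert: $t^\star$ is a genuine extremizer for $p\neq 1/2$, while for $p=q=1/2$ optimality is only approached as $t\to 0$, as you say.
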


Note that the constant in front of the right hand side of the inequality
provided by Lemma \ref{le:gi-bern} is nothing else but $pq/(4c_p)$ where $c_p$
is as in Theorem \ref{th:two-mix-lap} and Lemma \ref{le:cp}. At this stage, it
is important to understand the deep difference between the Poincar\'e and the
logarithmic Sobolev inequalities at the level of the two-point space. On the
two-point space, the Poincar\'e inequality turns out to be a simple equality,
and Lemma \ref{le:gi-bern} is in fact an entropy-variance comparison. Namely,
for every $p\in(0,1)$ and $f:\{0,1\}\to\dR$,
$$
\ent{p\de_1+q\de_0}{f^2}\leq\frac{\log(q)-\log(p)}{q-p}\var{p\de_1+q\de_0}{f}.
$$
This inequality is optimal and $(\log(q)-\log(p))/(q-p)$ tends to $+\infty$ as
$\min(p,q)$ goes to $0$. Also, for strongly asymmetric Bernoulli measures, the
entropy of the square can take extremely big values for a fixed prescribed
variance. This elementary phenomenon helps to better understand the surprising
difference in the behavior of the Poincar\'e and logarithmic Sobolev constants
of certain two-component mixtures exhibited in the sequel. Moreover, this
observation suggests to use asymmetric test functions inspired from the
two-point space in order to show that the logarithmic Sobolev constant may
blow up when the mixing law is strongly asymmetric. We shall adopt however
another (quantitative) route.

\subsection{Decomposition of the variance and entropy of the mixture}

Let $\mu_0$ and $\mu_1$ be two laws on $\dR^d$, $p\in [0,1]$, $q=1-p$,
$\nu=p\delta_1+q\delta_0$, and $\mu_p=p\mu_1+q\mu_0$. Then, one can decompose
and bound the variance of $f:\dR^d\to\dR$ with respect to $\mu_p$ as
\begin{align*}
  \var{\mu_p}{f}%
  &=\bE_\nu\PAR{\theta\mapsto\var{\mu_\theta}{f}}%
  +\var{\nu}{\theta\mapsto\bE_{\mu_\theta}f}\\
  &=\bE_\nu\PAR{\theta\mapsto\var{\mu_\theta}{f}}%
  +pq\PAR{\bE_{\mu_0}f-\bE_{\mu_1}f}^2 \\
  &\leq \max(C_\textsc{PI}(\mu_0),C_\textsc{PI}(\mu_1))%
  \bE_\mu(\ABS{\nabla f}^2) %
  +pq\PAR{\bE_{\mu_0}f-\bE_{\mu_1}f}^2.
\end{align*}
For the entropy, let us write 
$$
  \ent{\mu_p}{f^2}%
  =\bE_\nu\PAR{\te\mapsto\ent{\mu_\theta}{f^2}}%
  +\ent{\nu}{\te\mapsto\bE_{\mu_\theta}(f^2)}.
$$
Applying Lemma \ref{le:gi-bern} to the function
$\te\mapsto\sqrt{\bE_{\mu_\theta}(f^2)}$, one gets
$$
\ent{\nu}{\te\mapsto\bE_{\mu_\theta}(f^2)}\leq %
\frac{pq(\log q-\log p)}{q-p}%
\PAR{\sqrt{\bE_{\mu_0}(f^2)}-\sqrt{\bE_{\mu_1}(f^2)}}^2. 
$$
Since
$\bE_{\mu_0}(f)\bE_{\mu_1}(f)\leq\sqrt{\bE_{\mu_0}(f^2)\bE_{\mu_1}(f^2)}$,
we have
\begin{align*}
  \PAR{\sqrt{\bE_{\mu_0}(f^2)}-\sqrt{\bE_{\mu_1}(f^2)}}^2=&%
 \bE_{\mu_0}(f^2)+\bE_{\mu_1}(f^2)-2\sqrt{\bE_{\mu_0}(f^2)\bE_{\mu_1}(f^2)}\\
\leq&\var{\mu_0}{f}+\var{\mu_1}{f}+\PAR{\bE_{\mu_0}f-\bE_{\mu_1}f}^2.  
\end{align*}
(Note that the right hand side is not equal to zero if $\mu_0=\mu_1$).
Using the Poincar\'e inequalities for $\mu_0$ and $\mu_1$ provides the
following control of the entropy:
\begin{align*}
  \ent{\mu_p}{f^2}%
\leq& \max(C_\textsc{GI}(\mu_0),C_\textsc{GI}(\mu_1))%
  \bE_\mu(\ABS{\nabla f}^2)\\%
&+\frac{pq(\log q-\log p)}{q-p}
\PAR{\bE_{\mu_0}f-\bE_{\mu_1}f}^2\\
&+\max(C_\textsc{PI}(\mu_0),C_\textsc{PI}(\mu_1))%
  \frac{\log q-\log p}{q-p}\bE_{\mu_p}(\ABS{\nabla f}^2). 
\end{align*}
(The worst term is the last one since it always explodes as $\min(p,q)$ goes
to zero).
We thus see that in both cases (Poincar\'e and logarithmic Sobolev
inequalities), the problem can be reduced to the control of the
mean-difference term $\PAR{\bE_{\mu_0}f-\bE_{\mu_1}f}^2$ in terms of
$\bE_\mu(\ABS{\nabla f}^2)$ for every smooth function $f$. Note that this task
is impossible if $\mu_0$ and $\mu_1$ have disjoint supports. 

\begin{rem}[Finite mixtures]\label{rm:fimix}
  Let $(\mu_i)_{1\leq i\leq n}$ be a family of probability measures on
  $\dR^d$. Consider the finite mixture $\mu=\cM(\nu,(\mu_i)_{1\leq i\leq n})$
  with mixing measure $\nu=p_1\de_1+\cdots+p_n\de_n$. The decomposition of
  variance is a general fact valid in particular for $\mu$, and writes
  $$
  \var{\mu}{f}=\bE_\nu\PAR{\te\mapsto\var{\mu_\te}{f}}%
  +\var{\nu}{\te\mapsto\bE_{\mu_\te}f}.
  $$
  Here again, the first term in the right hand side may be controlled with the
  Poincar\'e inequality for each of the components $(\mu_i)_{1\leq i\leq n}$.
  For the second term of the right hand side, it remains to notice that for
  every $g:\Te=\{1,\ldots,n\}\to\dR$,
  $$
  \var{\nu}{g} =\frac{1}{2}\sum_{i,j} p_ip_j(g(i)-g(j))^2 %
  =\sum_{i<j} p_ip_j(g(i)-g(j))^2
  $$
  which gives for $g=\bE_{\mu_\te}(f)$ the identity
  $$
  \var{\nu}{\bE_{\mu_\te}f} %
  = \sum_{i<j} p_ip_j\PAR{\bE_{\mu_i}f-\bE_{\mu_j}f}^2.
  $$
  As for the two-component case, this further reduces the Poincar\'e
  inequality for $\mu$ to the control of the mean-differences
  $\PAR{\bE_{\mu_i}f-\bE_{\mu_j}f}^2$ in terms of $\bE_\mu(|\nabla f|^2)$. An
  analogous approach for the entropy and the logarithmic Sobolev inequality
  can be obtained by using \cite[Theorem A1 p. 49]{MR1410112} for instance.
\end{rem}

\subsection{Control of the mean-difference in dimension one}

The following lemma provides the control of the mean-difference term
$\PAR{\bE_{\mu_0}f-\bE_{\mu_1}f}^2$ in the case where $\mu_0$ and $\mu_1$ are
probability measures on $\dR$ (\emph{i.e.} $d=1$).

\begin{lem}[Control of the mean-difference term in dimension one]\label{le:Ip}
  Let $\mu_0$ and $\mu_1$ be two probability distributions on $\dR$ absolutely
  continuous with respect to the Lebesgue measure. Let us denote by $F_0$
  (respectively $F_1$) the cumulative distribution function and $f_0$
  (respectively $f_1$) the probability density function of $\mu_0$
  (respectively $\mu_1$). If $\mathrm{co}(S)$ denotes the convex envelope of
  the set $S=\mathrm{supp}(\mu_0)\cup\mathrm{supp}(\mu_1)$, then, for any
  $p\in(0,1)$, with $\mu_p=p\mu_1+q\mu_0$ and $q=1-p$, we have
  $$
  \PAR{\bE_{\mu_0}f-\bE_{\mu_1}f}^2 %
  \leq I(p)\,\bE_{\mu_p}(f'^2) %
  \quad\text{where}\quad%
  I(p)%
  =\int_{\mathrm{co}(S)}\!\frac{\PAR{F_1(x)-F_0(x)}^2}{pf_1(x)+qf_0(x)}\,dx,
  $$
  and the constant $I(p)$ cannot be improved. Moreover, the function $p\mapsto
  I(p)$ is convex, and
  \begin{equation}
    \label{eq:encadreI}
     \frac{1}{2\max(p,q)}I\PAR{\frac{1}{2}} %
     \leq I(p) %
     \leq \frac{1}{2\min(p,q)}I\PAR{\frac{1}{2}}.
   \end{equation}
   Furthermore, if $S$ is not connected then $I$ is constant and equal to
   $\infty$, while the convexity of $I$ ensure that
   $\sup_{p\in(0,1)}I(p)=\max(I(0^+),I(1^-))$ where
   $$
   I(0^+)=\lim_{p\to 0^+}I(p) %
   \quad\text{and}\quad %
   I(1^-)=\lim_{p\to 1^-}I(p),
   $$  
   and $I(p)<\infty$ for every $p$ in $(0,1)$ if and only if
   $\max(I(0^+),I(1^-))<\infty$.
 \end{lem}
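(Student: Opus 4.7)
The starting point is a one-dimensional integration by parts. Writing $G:=F_1-F_0$, one has $G'=f_1-f_0$, $G(\pm\infty)=0$, and $G\equiv0$ outside $\mathrm{co}(S)$. For any smooth $f$ with suitable decay at infinity (take compactly supported smooth $f$ first, and extend by density to functions with $\bE_{\mu_p}(f'^2)<\infty$),
\[
\bE_{\mu_1}f-\bE_{\mu_0}f
=\int_{\dR}f(f_1-f_0)\,dx
=-\int_{\mathrm{co}(S)}f'(x)(F_1(x)-F_0(x))\,dx.
\]
A weighted Cauchy--Schwarz with weight $\rho:=pf_1+qf_0$ then yields
\[
\PAR{\bE_{\mu_1}f-\bE_{\mu_0}f}^2
\leq\PAR{\int_{\mathrm{co}(S)}(f')^2\rho\,dx}\PAR{\int_{\mathrm{co}(S)}\frac{G^2}{\rho}\,dx}
=\bE_{\mu_p}(f'^2)\,I(p),
\]
which is the claimed inequality (and is trivial when $I(p)=+\infty$).

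Sharpness follows by saturating the Cauchy--Schwarz step: take $f$ with $f'=G/\rho$ on $\{\rho>0\}$ (and $f'=0$ elsewhere, using the convention $0/0=0$). A direct computation gives $\bE_{\mu_1}f-\bE_{\mu_0}f=-I(p)$ and $\bE_{\mu_p}(f'^2)=\int G^2/\rho\,dx=I(p)$, so the ratio equals exactly $I(p)$. A truncation argument deals with integrability at infinity; this shows that $I(p)$ cannot be replaced by any smaller constant.

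For convexity and the sandwich bounds, the integrand of $I(p)$ is $G(x)^2$ times $p\mapsto1/(pf_1(x)+qf_0(x))$, and $p\mapsto1/(pa+(1-p)b)$ is convex on $(0,1)$ for all $a,b>0$ (reciprocal of a positive affine function), so $I$ is convex as a non-negative integral of pointwise convex functions. The pointwise elementary inequalities
\[
2\min(p,q)\cdot\tfrac{f_0+f_1}{2}\leq pf_1+qf_0\leq 2\max(p,q)\cdot\tfrac{f_0+f_1}{2}
\]
come from the non-negativity of $(p-\min(p,q))f_1+(q-\min(p,q))f_0$ and the symmetric identity on the upper side. Inverting, integrating against $G^2$, and recognizing $(f_0+f_1)/2$ as the density of $\mu_{1/2}$ yields \eqref{eq:encadreI}.

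For the endpoint and disconnection claims, convexity of $I$ on $(0,1)$ forces the existence of monotone limits at $0^+$ and $1^-$ and the identity $\sup_{p\in(0,1)}I(p)=\max(I(0^+),I(1^-))$, from which the finiteness equivalence is read off together with the sandwich bound. If $S$ is disconnected, pick a gap $(a,b)\subset\mathrm{co}(S)\setminus S$; on it $f_0\equiv f_1\equiv 0$ and $G$ is a non-zero constant in the non-degenerate case, so the integrand equals $+\infty$ on a set of positive Lebesgue measure and $I(p)=+\infty$ identically. The main delicate point is the rigorous treatment of the set $\{\rho=0\}\cap\mathrm{co}(S)$ — both in the integration by parts (where one needs $G$ to vanish wherever the integrand would be indeterminate) and in the sharpness construction (where the extremizer must be realized as a genuine Lipschitz function and approximated by smooth compactly supported functions) — and this is the step that forces the appearance of $\mathrm{co}(S)$ rather than $\dR$ in the definition of $I(p)$.
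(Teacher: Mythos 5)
Your proof follows the same route as the paper's: integration by parts to write $\bE_{\mu_0}f-\bE_{\mu_1}f=\int_{\mathrm{co}(S)}(F_1-F_0)f'$, weighted Cauchy--Schwarz with weight $pf_1+qf_0$ to get the main inequality, the equality case of Cauchy--Schwarz for sharpness, and the pointwise sandwich $2\min(p,q)\tfrac{f_0+f_1}{2}\leq pf_1+qf_0\leq 2\max(p,q)\tfrac{f_0+f_1}{2}$ for the bounds \eqref{eq:encadreI}. The paper dispatches the convexity, disconnection and endpoint claims with ``the other claims of the lemma are immediate,'' while you spell them out (pointwise convexity of $p\mapsto 1/(ap+b)$, the gap argument for disconnected $S$); you also correctly flag that the $I\equiv\infty$ conclusion for disconnected $S$ needs $G=F_1-F_0$ to be nonzero on the gap, a degeneracy (e.g.\ $\mu_0=\mu_1$ with split support) that the paper's terse phrasing glosses over as well.
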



\begin{proof}[Proof of Lemma \ref{le:Ip}]
  For any smooth and compactly supported function $f$, an integration by parts
  gives for every $\te\in\{0,1\}$,
  $$
  \bE_{\mu_\theta}f=\int_\dR\!f(x)f_\theta(x)\,dx
  =-\int_\dR\!f'(x)F_\theta(x)\,dx. 
  $$
  Since $F_1-F_0=0$ outside $\mathrm{co}(S)$ we have
  $$
  \bE_{\mu_0}f-\bE_{\mu_1}f=\int_{\mathrm{co}(S)}\!(F_1(x)-F_0(x))f'(x)\,dx.
  $$
  It remains to use the Cauchy-Schwarz inequality, which gives
  \begin{align*}
    \PAR{\bE_{\mu_0}f-\bE_{\mu_1}f}^2
    &=\PAR{\int_{\mathrm{co}(S)}\!\frac{F_0(x)-F_1(x)}{\sqrt{pf_1(x)+qf_0(x)}}
      f'(x){\sqrt{pf_1(x)+qf_0(x)}}\,dx}^2\\
    &\leq I(p)\,\int_{\mathrm{co}(S)}\! f'(x)^2 (pf_1(x)+qf_0(x))  dx %
    =I(p)\bE_{\mu_p}(f'^2).
  \end{align*}
  The equality case of the Cauchy-Schwarz inequality provides the optimality
  of $I(p)$. The bound \eqref{eq:encadreI} follows from
  $2\min(p,q)(f_0+f_1)/2\leq pf_1+qf_0\leq 2\max(p,q)(f_0+f_1)/2$. The other
  claims of the lemma are immediate.
\end{proof}

\subsection{Control of the Poincar\'e and logarithmic Sobolev constants}

By combining the decomposition of the variance and of the entropy given at the
beginning of the current section with Lemma \ref{le:Ip} and Lemma
\ref{le:gi-bern}, we obtain the following Theorem.

\begin{thm}[Poincar\'e and logarithmic Sobolev inequalities for two-component
  mixtures]
  \label{th:pi-lsi-2-comp}
  Let $\mu_0$ and $\mu_1$ be two probability distributions on $\dR$ absolutely
  continuous with respect to the Lebesgue measure, and consider the
  two-component mixture $\mu_p=p\mu_1+q\mu_0$ with $0\leq p\leq 1$ and
  $q=1-p$. If $I(p)$ is as in Lemma \ref{le:Ip} then for every $p\in(0,1)$,
  $$
  C_\textsc{PI}(\mu_p)%
  \leq \max(C_\textsc{PI}(\mu_0),C_\textsc{PI}(\mu_1))+pq I(p) 
  $$ 
  and 
  $$
  C_\textsc{GI}(\mu_p) %
  \leq\max(C_\textsc{GI}(\mu_0),C_\textsc{GI}(\mu_1))%
  +\frac{\log q-\log p}{q-p} %
\PAR{pqI(p)+ \max(C_\textsc{PI}(\mu_0),C_\textsc{PI}(\mu_1))}.
  $$
  In particular, since $\sup_{p\in(0,1)}I(p)=\max(I(0^+),I(1^-))$ where
  $I(0^+)$ and $I(1^-)$ are as in Lemma \ref{le:Ip}, we get the following
  uniform bound:
  $$
  \sup_{p\in(0,1)}
  C_\textsc{PI}(\mu_p)%
  \leq \max(C_\textsc{PI}(\mu_0),C_\textsc{PI}(\mu_1))
  +\frac{1}{4}\max(I(0^+),I(1^-)).
  $$ 
  Moreover, if $I(0^+)<\infty$ (respectively if $I(1^-)<\infty$) then
  $$
  \limsup_{p\to0^+\text{respectively $1^-$}} %
  C_\textsc{PI}(\mu_p)\leq\max(C_\textsc{PI}(\mu_0),C_\textsc{PI}(\mu_1)) 
  $$
\end{thm}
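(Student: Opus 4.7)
The plan is to package the variance and entropy decompositions spelled out immediately before the theorem statement together with Lemmas \ref{le:gi-bern} and \ref{le:Ip}; beyond those ingredients the argument is bookkeeping.

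For the Poincar\'e claim I will start from the elementary identity
$$
\var{\mu_p}{f}=p\,\var{\mu_1}{f}+q\,\var{\mu_0}{f}+pq\PAR{\bE_{\mu_1}f-\bE_{\mu_0}f}^2.
$$
Applying the Poincar\'e inequalities for $\mu_0$ and $\mu_1$ to the first two summands, bounding both constants by their maximum, and recognising $p\bE_{\mu_1}(f'^2)+q\bE_{\mu_0}(f'^2)=\bE_{\mu_p}(f'^2)$ produces the contribution $\max(C_\textsc{PI}(\mu_0),C_\textsc{PI}(\mu_1))\bE_{\mu_p}(f'^2)$, while Lemma \ref{le:Ip} controls the last term by $pq\,I(p)\,\bE_{\mu_p}(f'^2)$. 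Summing yields the stated inequality.

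For the Gross bound I rely on the analogous entropy decomposition
$$
\ent{\mu_p}{f^2}=p\,\ent{\mu_1}{f^2}+q\,\ent{\mu_0}{f^2}+\ent{\nu}{\te\mapsto\bE_{\mu_\te}(f^2)}.
$$
The first two summands are handled exactly as in the Poincar\'e case, now yielding $\max(C_\textsc{GI}(\mu_0),C_\textsc{GI}(\mu_1))\bE_{\mu_p}(f'^2)$. For the third, Lemma \ref{le:gi-bern} applied to $\te\mapsto\sqrt{\bE_{\mu_\te}(f^2)}$ produces a bound in terms of $pq(\log q-\log p)/(q-p)$ times $\PAR{\sqrt{\bE_{\mu_0}(f^2)}-\sqrt{\bE_{\mu_1}(f^2)}}^2$. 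The only genuinely algebraic step is to dominate this square, via Cauchy--Schwarz in the form $\bE_{\mu_0}(f)\bE_{\mu_1}(f)\leq\sqrt{\bE_{\mu_0}(f^2)\bE_{\mu_1}(f^2)}$, by $\var{\mu_0}{f}+\var{\mu_1}{f}+\PAR{\bE_{\mu_1}f-\bE_{\mu_0}f}^2$. The two variances are then controlled by the Poincar\'e constants of $\mu_0$ and $\mu_1$; since $\bE_{\mu_0}(f'^2)+\bE_{\mu_1}(f'^2)\leq\min(p,q)^{-1}\bE_{\mu_p}(f'^2)$, the prefactor simplifies using $pq/\min(p,q)=\max(p,q)\leq 1$. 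Lemma \ref{le:Ip} takes care of the remaining mean-difference, and the three contributions assemble into the claimed logarithmic Sobolev bound.

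The uniform and limit statements then follow immediately from the Poincar\'e estimate: the convexity of $I$ asserted in Lemma \ref{le:Ip} gives $\sup_{p\in(0,1)}I(p)=\max(I(0^+),I(1^-))$, and combined with $pq\leq 1/4$ on $[0,1]$ this produces the uniform bound. For the limit claims, if $I(0^+)<\infty$ then $pq\,I(p)\to 0$ as $p\to 0^+$ because $pq\to 0$ while $I(p)\to I(0^+)$, and similarly at $1^-$. The main obstacle, such as it is, is the Cauchy--Schwarz manipulation producing the mixed variance/mean-difference bound on $(\sqrt{a}-\sqrt{b})^2$; everything else is careful accounting of the $p$, $q$, and $\min(p,q)$ factors so that gradient terms reassemble into $\bE_{\mu_p}(f'^2)$.
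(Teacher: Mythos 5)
Your proposal is correct and follows essentially the same route as the paper: the same variance and entropy decompositions, Lemma \ref{le:gi-bern} applied to $\te\mapsto\sqrt{\bE_{\mu_\te}(f^2)}$ with the Cauchy--Schwarz bound on $\PAR{\sqrt{\bE_{\mu_0}(f^2)}-\sqrt{\bE_{\mu_1}(f^2)}}^2$, and Lemma \ref{le:Ip} for the mean-difference, with the convexity of $I$ giving the uniform and limiting statements. Your bookkeeping via $\bE_{\mu_0}(f'^2)+\bE_{\mu_1}(f'^2)\leq\min(p,q)^{-1}\bE_{\mu_p}(f'^2)$ and $pq/\min(p,q)=\max(p,q)\leq1$ is just a cosmetic variant of the paper's use of $pq(\bE_{\mu_0}(f'^2)+\bE_{\mu_1}(f'^2))\leq\bE_{\mu_p}(f'^2)$ and yields the identical constants.
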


The upper bounds given by Theorem \ref{th:pi-lsi-2-comp} must be understood in
$[0,\infty]$ since the right hand side can be infinite (in such a case the
bound is of course useless). Additionally, by Lemma \ref{le:Ip}, the function
$p\mapsto I(p)$ is convex, and it is possible that $I(1/2)<\infty$ while
$\max(I(0^+),I(1^-))=\infty$. The following corollary provides a uniform bound
on the Poincar\'e constant of a two-component mixture in terms of $I(1/2)$
without using $\max(I(0^+),I(1^-))$. This corollary has no immediate
logarithmic Sobolev counterpart, as explained in the remark below following
the proof of the corollary.

\begin{cor}[Uniform Poincar\'e inequality for two-component
  mixtures]\label{co:unif-2-pi}
  Let $\mu_0$ and $\mu_1$ be two probability distributions on $\dR$ absolutely
  continuous with respect to the Lebesgue measure and consider the mixture
  $\mu_p=p\mu_1+q\mu_0$ for every $p\in[0,1]$. We have then
  $$
  \max_{p\in[0,1]}C_\textsc{PI}(\mu_p)
  \leq \max(C_\textsc{PI}(\mu_0),C_\textsc{PI}(\mu_1))
  +\frac{1}{2}I\PAR{\frac{1}{2}}
  $$
  where $I(1/2)$ is as in Lemma \ref{le:Ip}.
\end{cor}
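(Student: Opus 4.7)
The plan is to combine the Poincaré upper bound from Theorem~\ref{th:pi-lsi-2-comp} with the convexity/comparison estimate \eqref{eq:encadreI} from Lemma~\ref{le:Ip}. Specifically, Theorem~\ref{th:pi-lsi-2-comp} gives, for every $p \in (0,1)$,
$$
C_\textsc{PI}(\mu_p) \leq \max(C_\textsc{PI}(\mu_0),C_\textsc{PI}(\mu_1)) + pq\, I(p),
$$
so the task reduces to controlling the quantity $pq\, I(p)$ by $\tfrac{1}{2} I(1/2)$, uniformly in $p$.

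The key observation is that the upper estimate in \eqref{eq:encadreI} reads
$$
I(p) \leq \frac{1}{2\min(p,q)} I\!\left(\tfrac{1}{2}\right).
$$
Multiplying by $pq$ and using the identity $pq/\min(p,q) = \max(p,q)$, I obtain
$$
pq\, I(p) \leq \frac{pq}{2\min(p,q)} I\!\left(\tfrac{1}{2}\right) = \frac{\max(p,q)}{2}\, I\!\left(\tfrac{1}{2}\right) \leq \frac{1}{2}\, I\!\left(\tfrac{1}{2}\right),
$$
since $\max(p,q) \leq 1$. Plugging this into the bound from Theorem~\ref{th:pi-lsi-2-comp} yields the claimed estimate on $(0,1)$.

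It remains to treat the endpoints $p \in \{0,1\}$. At $p=0$ one has $\mu_p = \mu_0$ and at $p=1$ one has $\mu_p = \mu_1$, so the Poincaré constant reduces to $C_\textsc{PI}(\mu_0)$ or $C_\textsc{PI}(\mu_1)$, both trivially bounded by $\max(C_\textsc{PI}(\mu_0),C_\textsc{PI}(\mu_1))$; the uniform bound therefore holds on the full closed interval $[0,1]$. There is no genuine obstacle here: once Theorem~\ref{th:pi-lsi-2-comp} and the comparison \eqref{eq:encadreI} are in hand, the argument is a one-line algebraic manipulation. The only subtle point worth emphasising is that the uniformity is obtained without invoking the limits $I(0^+)$ or $I(1^-)$, which may well be infinite even when $I(1/2)$ is finite; this is precisely the added value of the corollary compared to the last assertion of Theorem~\ref{th:pi-lsi-2-comp}.
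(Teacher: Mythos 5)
Your argument is correct and follows the same route as the paper: apply the upper bound in \eqref{eq:encadreI}, rewrite $pq = \max(p,q)\min(p,q)$ to get $pq\,I(p) \leq \tfrac{1}{2}\max(p,q)\,I(1/2) \leq \tfrac{1}{2}I(1/2)$, and plug into Theorem~\ref{th:pi-lsi-2-comp}. The explicit treatment of the endpoints $p\in\{0,1\}$ and the remark on not needing $I(0^+)$ or $I(1^-)$ are sound, if slightly more verbose than the paper's one-line proof.
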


\begin{proof}[Proof of Corollary \ref{co:unif-2-pi}]
  We observe that thanks to \eqref{eq:encadreI}, one has
  $$
  pqI(p)=\max(p,q)\min(p,q)I(p)\leq
  \frac{1}{2}I\PAR{\frac{1}{2}}
  $$
  and Theorem \ref{th:pi-lsi-2-comp} provides the desired result.
\end{proof}

\begin{rem}[Blow-up of the logarithmic Sobolev constant]
  With the notations of Corollary \ref{co:unif-2-pi}, we have, by using the
  same argument, that for every $p\in(0,1)$, 
  $$
  C_\textsc{GI}(\mu_p)
  \leq \max(C_\textsc{GI}(\mu_0),C_\textsc{GI}(\mu_1))
  +\frac{1}{2}\frac{\log(q)-\log(p)}{q-p}
  \PAR{I\PAR{\frac{1}{2}}+\max(C_\textsc{PI}(\mu_0),C_\textsc{PI}(\mu_1))}.
  $$
  Since $(\log(q)-\log(p))/(q-p)$ goes to $+\infty$ at speed
  $-\log(\min(p,q))$ as $\min(p,q)$ goes to $0$, we cannot derive a uniform
  logarithmic Sobolev inequality for two-component mixtures under the sole
  assumption that $I(1/2)<\infty$. Surprisingly, we shall see in the sequel
  that this behavior is sharp and cannot be improved in general for
  two-component mixtures.
\end{rem}

\subsection{The fundamental example of two Gaussians with identical variance}
\label{ss:mixgau}

It was already observed by Johnson in \cite[Theorem 1.1 page 536]{MR2141356}
that for the finite univariate Gaussian mixture
$\mu=p_1\cN(m_1,\tau^2)+\cdots+p_n\cN(m_n,\tau^2)$, we have
$$
C_\textsc{PI}(\mu) %
\leq \tau\PAR{1+\frac{\sigma^2}{\tau\min_{1\leq i\leq n}
    p_i}\exp\PAR{\frac{\sigma^2}{\tau \min_{1\leq i\leq n} p_i}}}
$$
where $\si^2=(p_1m_1^2+\cdots+p_nm_n^2)-(p_1m_1+\cdots+p_nm_n)^2$ is the
variance of $p_1\de_{m_1}+\cdots+p_n\de_{m_n}$. This upper bound on the
Poincar\'e constant blows up as $\min_{1\leq i\leq n}p_i$ goes to $0$. Madras
and Randall have also obtained \cite[Theorem 1.2 and Section 5]{MR1910641}
upper bounds for the Poincar\'e constant of non-Gaussian finite mixtures under
an overlapping condition on the supports of the components. As for the result
of Johnson mentioned earlier, their upper bound blows up when the minimum
weight of the mixing law $\min_{1\leq i\leq n}p_i$ goes to $0$. In the sequel,
we show that the Poincar\'e constant can remain actually bounded as
$\min_{1\leq i\leq n}p_i$ goes to $0$. To fix ideas, we will consider the
special case of a two-component mixture of two Gaussian distributions
$\cN(-a,1)$ and $\cN(+a,1)$. As usual, we denote by $\Phi$ (respectively
$\varphi$) the cumulative distribution function (respectively probability
density function) of the standard Gaussian measure $\cN(0,1)$.

\begin{cor}[Mixture of two Gaussians with identical variance]
  \label{co:twogau}
  For any $a>0$ and $0<p<1$, let $\mu_0=\cN(-a,1)$ and $\mu_1=\cN(+a,1)$, and
  define the two-component mixture $\mu_p=p\mu_1+q\mu_0$. Then
  $$
  C_\textsc{PI}(\mu_p) %
  \leq 1+pq
  4a^2\PAR{\Phi(2a)e^{4a^2}+\frac{2a}{\sqrt{2\pi}}e^{2a^2}+\frac{1}{2}}
  $$
  Additionally, a sharper upper bound for $p=1/2$ is given by
  $$
  C_\textsc{PI}(\mu_{1/2}) \leq 
  1+a\frac{2\Phi(a)-1}{2\vphi(a)}.
  $$
\end{cor}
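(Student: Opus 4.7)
My plan is to apply Theorem \ref{th:pi-lsi-2-comp} to $\mu_0=\cN(-a,1)$ and $\mu_1=\cN(a,1)$. Each component is a translate of the standard Gaussian, so $C_\textsc{PI}(\mu_0)=C_\textsc{PI}(\mu_1)=1$, and the task reduces to bounding the quantity $I(p)$ from Lemma \ref{le:Ip} by the constants on the right-hand side of each claim---uniformly over $p$ for the first inequality, and sharply at $p=1/2$ for the second. For the uniform bound, I would first exploit the convexity of $p\mapsto I(p)$ asserted by Lemma \ref{le:Ip}, together with the symmetry $I(p)=I(q)$ that follows from the Gaussian identities $f_0(-x)=f_1(x)$ and $F_0(-x)=1-F_1(x)$ through the change of variable $x\mapsto -x$. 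Combined, they reduce $\sup_p I(p)$ to $I(0^+)=\int_{\dR}(F_0-F_1)^2/f_0\,dx$.

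The substitution $u=x+a$ then turns $f_0(x)$ into $\varphi(u)$ and $F_0(x)-F_1(x)$ into $\Phi(u)-\Phi(u-2a)=\int_{u-2a}^u\varphi(s)\,ds$, so I would bound $\int(\Phi(u)-\Phi(u-2a))^2/\varphi(u)\,du$ by splitting $\dR$ into $(-\infty,0]$, $[0,2a]$, and $[2a,+\infty)$. By unimodality of $\varphi$ at $0$, $\Phi(u)-\Phi(u-2a)\leq 2a\max_{s\in[u-2a,u]}\varphi(s)$, the maximum being $\varphi(u)$, $\varphi(0)=1/\sqrt{2\pi}$, and $\varphi(u-2a)$ on the three regions respectively. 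Straightforward integration---using $\int_{-\infty}^0\varphi=1/2$, $\min_{[0,2a]}\varphi=\varphi(2a)=e^{-2a^2}/\sqrt{2\pi}$, and the Gaussian shift identity $\varphi(u-2a)^2/\varphi(u)=\varphi(u-4a)e^{4a^2}$ with $\int_{2a}^\infty\varphi(u-4a)\,du=\Phi(2a)$---yields the three contributions $2a^2$, $\tfrac{8a^3}{\sqrt{2\pi}}e^{2a^2}$, and $4a^2 e^{4a^2}\Phi(2a)$. Factoring $4a^2$ and multiplying by $pq$ delivers the first inequality.

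For the sharper estimate at $p=1/2$, I would work directly with $I(1/2)=2\int(F_0-F_1)^2/(f_0+f_1)\,dx$. Writing $(F_0-F_1)(x)=\int_{x-a}^{x+a}\varphi(s)\,ds$ and applying Fubini recasts this as $2\int_\dR\varphi(s)\int_{s-a}^{s+a}\rho(x)\,dx\,ds$ with $\rho:=(F_0-F_1)/(f_0+f_1)$. The identity $\varphi(x\pm a)=\varphi(x)e^{\mp ax-a^2/2}$ gives $(F_0-F_1)'=-(f_0+f_1)\tanh(ax)$, hence the ODE $\rho'(x)=x\rho(x)-(1+a\rho(x))\tanh(ax)$. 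The crucial claim to establish is that $\rho$ is globally maximized at $x=0$, where $\rho(0)=(2\Phi(a)-1)/(2\varphi(a))$. At any positive critical point one has $\rho/(1+a\rho)=\tanh(ax)/x$; since the right-hand side is strictly decreasing in $x\in(0,\infty)$ while the left-hand side is strictly increasing in $\rho$, the critical values of $\rho$ decrease with $x$, forcing at most one critical point in $(0,\infty)$, which combined with the local maximum at $0$ (ensured by $\rho'(0)=0$ and $\rho''(0)=\rho(0)(1-a^2)-a\leq 0$, the latter being a consequence of the elementary inequality $(2\Phi(a)-1)(1-a^2)\leq 2a\varphi(a)$) forces $\rho\leq\rho(0)$ everywhere. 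Granting the claim, the inner integral is bounded by $2a\rho(0)$ and the outer by $\int\varphi=1$, so $pqI(1/2)=I(1/2)/4\leq a(2\Phi(a)-1)/(2\varphi(a))$.

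The main obstacle is the global extremality of $\rho$ underpinning the $p=1/2$ bound; the uniform estimate for general $p$ follows routinely once convexity, symmetry, and the three-region split are assembled.
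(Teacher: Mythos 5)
Your proof follows the same overall architecture as the paper's: apply Theorem \ref{th:pi-lsi-2-comp} with $C_\textsc{PI}(\mu_0)=C_\textsc{PI}(\mu_1)=1$, reduce the uniform-in-$p$ bound to $\sup_p I(p)=I(0^+)=I(1^-)$ by convexity and symmetry, and bound $I(0^+)$ by splitting into the three regions where the unimodal $\varphi$ attains its maximum on $[u-2a,u]$ at the right endpoint, at $0$, or at the left endpoint. Your computations there (including the shift identity $\varphi(u-2a)^2/\varphi(u)=e^{4a^2}\varphi(u-4a)$ and the crude bound $\int_0^{2a}e^{u^2/2}du\leq 2ae^{2a^2}$) match the paper's step by step, modulo the translation $u=x+a$ versus $v=x-a$.

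For the $p=1/2$ bound, the paper isolates the key fact as Lemma \ref{le:band-bound}: the ratio $\rho(x)=(\Phi(x+a)-\Phi(x-a))/(\varphi(x+a)+\varphi(x-a))$ is maximized at $x=0$, with value $\tau_a=(2\Phi(a)-1)/(2\varphi(a))$. You re-derive this inline by a genuinely different method: you compute the first-order ODE $\rho'=x\rho-(1+a\rho)\tanh(ax)$ and analyze critical points directly, whereas the paper analyzes the auxiliary function $\al(x)=\Phi(x+a)-\Phi(x-a)-\tau(\varphi(x+a)+\varphi(x-a))$ and characterizes its three critical points via the convex function $\be(x)=-1+x\coth(x)$. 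Your ODE, the formula $\rho''(0)=\rho(0)(1-a^2)-a$, and the inequality $(2\Phi(a)-1)(1-a^2)\leq 2a\varphi(a)$ (which follows by noting that the derivative of $a\mapsto 2a\varphi(a)-(2\Phi(a)-1)(1-a^2)$ is $2a(2\Phi(a)-1)\geq0$) all check out, as does the monotonicity of $\tanh(ax)/x$. The Fubini step is correct but superfluous: the paper goes directly via $I(1/2)\leq 2\tau_a\int(\Phi(x+a)-\Phi(x-a))\,dx=4a\tau_a$.

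One imprecision worth flagging: the assertion that strictly decreasing critical values forces \emph{at most one critical point} on $(0,\infty)$ is not literally established (a monotonically decreasing $\rho$ could in principle pass through several degenerate inflection points where $\rho'=0$). What the argument actually shows, and what suffices, is that $\rho$ cannot have a strict local minimum in $(0,\infty)$: a local minimum at $x_1$ would be followed by a local maximum at some $x_2>x_1$ with $\rho(x_2)>\rho(x_1)$, contradicting the strictly decreasing critical values. Combined with $x=0$ being a local maximum, this forces $\rho$ to be non-increasing on $(0,\infty)$, hence $\rho\leq\rho(0)$ there, and evenness of $\rho$ (which you should state explicitly, as it underlies reducing to $x>0$) gives the global bound. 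So the conclusion stands once the phrase ``at most one critical point'' is replaced by ``no local minimum.''
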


Note that as a function of $p$, the obtained upper bounds on the constants are
continuous on the whole interval $[0,1]$. The bound \eqref{eq:K-PI-GI}
expressed in the univariate situation implies that $C_\textsc{PI}$ is always
greater than or equal to the variance of the probability measure. Here, the
variance of $\mu_p$ is equal to $1+4apq$. Then the upper bound on the
Poincar\'e constant given above is sharp for any $p\in(0,1)$ as $a$ goes to
$0$.

\begin{proof}[Proof of Corollary \ref{co:twogau}]
  Lemma \ref{le:Ip} ensures that $p\mapsto I(p)$ is a convex function: let us
  have a look at $I(0^+)$ and $I(1^-)$ which are here equal by symmetry. Since
  $$
  \Phi(x+a)-\Phi(x-a)=\int_{-a}^{+a}\!\vphi(x+u)\,du%
  \leq 2a
  \begin{cases}
    \vphi(x+a) &\text{if }x< -a,\\
    \vphi(0) &\text{if }-a\leq x\leq a,\\
    \vphi(x-a) &\text{if }a<x,
  \end{cases}
  $$
  one has 
  \begin{align*}
    I(1^-)&=\int_\dR\!\frac{(\Phi(x+a)-\Phi(x-a))^2}{\vphi(x-a)}\,dx\\
    &\leq 4
    a^2\PAR{\int_{-\infty}^{-a}\frac{\vphi(x+a)^2}{\vphi(x-a)}\,dx +
      \vphi(0)^2\int_{-a}^{+a}\frac{1}{\vphi(x-a)}\,dx
      + \int_{+a}^{+\infty}\!\!\vphi(x-a)\,dx}\\
    &\leq 4
    a^2\PAR{e^{4a^2}\int_{-\infty}^{-a}e^{-\frac{(x+3a)^2}{2}}\frac{1}{\sqrt{2\pi}}\,dx
      + \frac{1}{\sqrt{2\pi}}\int_{0}^{2a}e^{\frac{x^2}{2}}\,dx
      + \int_{0}^{+\infty}\!\!\vphi(x)\,dx}\\
    &\leq
    4a^2\PAR{\Phi(2a)e^{4a^2}+\frac{2a}{\sqrt{2\pi}}e^{2a^2}+\frac{1}{2}}.
  \end{align*}
  Then, the first statement follows from Theorem
  \ref{th:pi-lsi-2-comp}. For the second one, by Lemma
  \ref{le:band-bound} given at the end of the section, we have
  \begin{align*}
    I\PAR{\frac{1}{2}} &=2\int_\dR\!%
    \frac{\Phi(x+a)-\Phi(x-a)}{\vphi(x+a)+\vphi(x-a)}(\Phi(x+a)-\Phi(x-a))\,dx
    \\%
    &\leq 2\tau_a \int_\dR\!(\Phi(x+a)-\Phi(x-a))\,dx \\
    &=4a\tau_a.
  \end{align*}
  This gives as expected $I(1/2)\leq 2a(2\Phi(a)-1)/\varphi(a)$.
\end{proof}

The following lemma shows that $I(1/2)$ is related to some kind of ``band
isoperimetry''. Note that Lemma \ref{le:Ip} provides a more general approach
beyond the Gaussian case.

\begin{lem}[Band bound]\label{le:band-bound}
  For any $x\in\dR$ and any $a>0$,
  $$
  \frac{\Phi(x+a)-\Phi(x-a)}{\vphi(x+a)+\vphi(x-a)} %
  \leq \frac{\Phi(+a)-\Phi(-a)}{\vphi(+a)+\vphi(-a)} %
  =\tau_a
  $$
  Moreover, this constant cannot be improved. As an example, one has
  $\tau_1\approx 1.410686134$.
\end{lem}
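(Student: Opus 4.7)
The plan is to exploit the Gaussian identity $\vphi(x+u)=\vphi(x)\,e^{-xu-u^2/2}$ in order to strip out the dependence on $\vphi(x)$ from both numerator and denominator, reducing the inequality to a simple monotonicity statement for $\cosh$. Before starting I note that the ratio $R(x):=(\Phi(x+a)-\Phi(x-a))/(\vphi(x+a)+\vphi(x-a))$ is even in $x$ (using $\Phi(-y)=1-\Phi(y)$ and the evenness of $\vphi$), so it is equivalent to show that $R$ attains its global maximum at $x=0$.

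First I would rewrite the numerator as
\[
\Phi(x+a)-\Phi(x-a)=\int_{-a}^{a}\vphi(x+u)\,du
=\vphi(x)\int_{-a}^{a}e^{-xu-u^2/2}\,du.
\]
Since the weight $e^{-u^2/2}\,du$ on $[-a,a]$ is symmetric in $u$, the odd part of $e^{-xu}$ integrates to zero, so the last integral equals $\int_{-a}^{a}e^{-u^2/2}\cosh(xu)\,du$. Meanwhile the denominator factors cleanly as
\[
\vphi(x+a)+\vphi(x-a)=\vphi(x)\,e^{-a^2/2}\PAR{e^{-xa}+e^{xa}}
=2\vphi(x)\,e^{-a^2/2}\cosh(xa).
\]
Dividing yields the closed form
\[
R(x)=\frac{e^{a^2/2}}{2\cosh(xa)}\int_{-a}^{a}e^{-u^2/2}\cosh(xu)\,du.
\]

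To finish, I would observe that $\cosh$ is even and nondecreasing on $[0,\infty)$, hence $\cosh(xu)\leq\cosh(xa)$ for every $u\in[-a,a]$ and every $x\in\dR$. Pulling this pointwise bound out of the integral gives
\[
R(x)\leq\frac{e^{a^2/2}}{2}\int_{-a}^{a}e^{-u^2/2}\,du=R(0)=\tau_a,
\]
with strict inequality for $x\neq 0$; in particular $\tau_a$ cannot be improved since the bound is attained at $x=0$. The stated numerical value $\tau_1\approx 1.410686134$ then follows from a direct evaluation of $\tau_1=(2\Phi(1)-1)/(2\vphi(1))$. I do not foresee any real obstacle: the only mildly nonobvious ingredient is the Gaussian factorization identity, after which everything reduces to a one-line monotonicity argument for $\cosh$.
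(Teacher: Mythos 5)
Your proof is correct, and it takes a genuinely different and more elementary route than the paper. The paper's proof sets $\al(x)=\Phi(x+a)-\Phi(x-a)-\tau(\vphi(x+a)+\vphi(x-a))$, differentiates, and shows that the equation $\al'(x)=0$ reduces (apart from $x=0$) to $\tau^{-1}=-1+x\coth(x)$; convexity of that auxiliary function forces exactly three critical points $\{-x_\tau,0,x_\tau\}$, and the decay of $\al$ at $\pm\infty$ then reduces the global nonpositivity of $\al$ to a sign check at the origin. This is a full critical-point analysis. Your argument instead factors out $\vphi(x)$ from both numerator and denominator via $\vphi(x+u)=\vphi(x)e^{-xu-u^2/2}$, which collapses the ratio to
\[
R(x)=\frac{e^{a^2/2}}{2\cosh(xa)}\int_{-a}^{a}e^{-u^2/2}\cosh(xu)\,du,
\]
after which the pointwise bound $\cosh(xu)\le\cosh(xa)$ for $u\in[-a,a]$ gives the result immediately, with equality precisely at $x=0$. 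Your route is shorter, avoids any ODE-style sign analysis, and makes the strictness and optimality of the constant transparent; the paper's route has the minor advantage of not requiring the algebraic factorization trick, but is substantially more work. Both are valid; yours is cleaner.
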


\begin{proof}[Proof of Lemma \ref{le:band-bound}]
  Assume that $a=1$. Let $\tau>0$ and define for any $x\in\dR$
  $$
  \al(x)=\Phi(x+1)-\Phi(x-1)-\tau(\vphi(x+1)+\vphi(x-1)).
  $$
  One has $\al'(x)=0$ iff $\tau(1+x+(x-1)e^{2x})=e^{2x}-1$. Thus, either $x=0$,
  or 
  $$
  \tau^{-1} = \be(x)=-1+x\coth(x).
  $$
  The function $\be$ is even, convex, and achieves its global minimum $0$ at
  $x=0$. Therefore, the equation $\al'(x)=0$ has three solutions
  $\{-x_\tau,0,+x_\tau\}$, where $x_\tau>0$ satisfies $\tau\be(x_\tau)=1$. Since
  $\lim_{x\to\pm\infty}\al(x)=0$, one has $\al\leq0$ on $\dR$ if and only if
  $\al(0)\leq0$ and $\al''(0)\leq0$. The condition $\al(0)\leq0$ is fulfilled
  as soon as 
  $$
  \tau\geq \frac{\Phi(+1)-\Phi(-1)}{\vphi(+1)+\vphi(-1)}
  $$
  whereas the condition $\al''(0)\geq0$ holds for any $\tau$. The case where
  $a\neq1$ is similar.
\end{proof}

\begin{rem}[Relation with isoperimetry]
  If $A_x=[x-a,x+a]$ then $\pd A_x=\{x-a,x+a\}$. If $\ga=\cN(0,1)$ then
  $\ga(A_x)=\Phi(x+a)-\Phi(x-a)$ while $\ga_s(\pd A_x)=\vphi(x+a)+\vphi(x-a)$
  where $\gamma_s$ is the surface measure associated to $\gamma$, see e.g.
  \cite{MR1957087}. Lemma \ref{le:band-bound} expresses that for any
  $A\in\cC_a=\{A_x;x\in\dR\}$, we have $\ga(A) \leq \tau_a\ga_s(\pd A)$ and
  equality is achieved for $A=A_0$. Recall that the Gaussian isoperimetric
  inequality states that $(\vphi\circ\Phi^{-1})(\ga(A)) \leq \ga_s(\pd A)$ for
  any regular $A\subset\dR$ with equality when $A$ is a half line, see e.g.
  \cite{MR1957087} and references therein.
\end{rem}

\subsection{Gallery of examples of one-dimensional two-component mixtures}

Recall that if $\mu$ is a probability measure on $\dR$ with density $f>0$ and
median $m$ then
\begin{equation}\label{eq:hardy}
\max(b_-,b_+)\leq C_\textsc{GI}(\mu) \leq 16 \max(b_-,b_+)
\end{equation}
where
\begin{align*}
  b_+&=\sup_{x>m}\mu([x,+\infty))\log\PAR{1+\frac{1}{2\mu([x,+\infty))}}\int_m^x\!\frac{1}{f(y)}\,dy,\\
\end{align*}
and
\begin{align*}
  b_-&=\sup_{x<m}\mu((-\infty,x])\log\PAR{1+\frac{1}{2\mu((-\infty,x])}}\int_x^m\!\frac{1}{f(y)}\,dy.\\
\end{align*}
These bounds appear in \cite[Remark 7 page 9]{br} as a refinement of a famous
criterion by Bobkov and G\"otze based on previous works of Hardy and
Muckenhoupt, see also \cite{miclo}. More generally, the notion of measure
capacities constitutes a powerful tool for the control of $C_\textsc{PI}$ and
$C_\textsc{GI}$, see \cite{maz} and \cite{bcr,br}. In the present article, we
only use a weak version of such criteria, stated in the following lemma, and
which can be found for instance in \cite[Chapter 6 page 107]{MR2002g:46132}.
We will typically use it in order to show that $C_\textsc{GI}(p_1\mu+q\mu_0)$
blows up as $p$ goes to $0$ or $1$ for certain choices of $\mu_0$ and $\mu_1$.

\begin{lem}[Crude lower bound]\label{le:crude}
  Let $\mu$ be some distribution on $\dR$ with density $f>0$ then for every
  median $m$ of $\mu$ and every $x\leq m$, by denoting $\Psi(u)=-u\log(u)$,
  $$
  150\,C_\textsc{GI}(\mu)\geq \Psi(\mu(-\infty,x])\int_x^m\!\frac{1}{f(y)}\,dy.
  $$
\end{lem}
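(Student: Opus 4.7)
I would apply the logarithmic Sobolev inequality \eqref{eq:def-GI} to the one-dimensional capacity-extremal test function joining the sub-median set $A=(-\infty,x]$ to the super-median set $B=[m,+\infty)$. With $L=\int_x^m 1/f(t)\,dt$, set
$$h(y)=1\text{ on }(-\infty,x],\qquad h(y)=\frac{1}{L}\int_y^m\frac{dt}{f(t)}\text{ on }[x,m],\qquad h(y)=0\text{ on }[m,+\infty),$$
so that $h'(y)=-1/(Lf(y))$ on $(x,m)$ and $h'\equiv 0$ elsewhere. A direct computation then gives the Dirichlet form
$$\bE_\mu(h'^2)=\frac{1}{L^2}\int_x^m\frac{dy}{f(y)}=\frac{1}{L}.$$

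Next, writing $a=\mu((-\infty,x])$ and $b=\bE_\mu(h^2)\in[a,1/2]$ and using that $h^2\log h^2=0$ on $A\cup B$ (with the convention $0\log 0=0$), one obtains
$$\ent{\mu}{h^2}=\Psi(b)+\int_x^m h^2\log h^2\,d\mu\geq\Psi(b)-\frac{1}{2e},$$
since $\sup_{t\in[0,1]}|t^2\log t^2|=1/e$ and $\mu([x,m])=1/2-a\leq 1/2$. Combining with \eqref{eq:def-GI} produces the core estimate
$$L\Bigl(\Psi(b)-\frac{1}{2e}\Bigr)\leq C_\textsc{GI}(\mu).$$

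To pass from $\Psi(b)$ to $\Psi(a)$ and arrive at $\Psi(a)L\leq 150\,C_\textsc{GI}(\mu)$, I would case-split on the location of $b$ along the unimodal concave profile of $\Psi$ on $[0,1]$ (maximum $1/e$ at $u=1/e$). In the bulk regime where $b$ stays in a compact subinterval of $(0,1)$ on which $\Psi(b)-1/(2e)$ is bounded below by a universal positive constant $\kappa$, the core estimate immediately forces $L\leq C_\textsc{GI}(\mu)/\kappa$ and hence $\Psi(a)L\leq (1/(e\kappa))\,C_\textsc{GI}(\mu)$. In the degenerate small-$a$ regime where $b$ itself may be close to $a$, I would supplement the argument with the implied Poincar\'e inequality $C_\textsc{PI}(\mu)\leq C_\textsc{GI}(\mu)/2$ applied to the same $h$: noting that $\bar h=\bE_\mu(h)\leq 1/2$ gives $1-\bar h\geq 1/2$, so that $\var{\mu}{h}\geq a(1-\bar h)^2\geq a/4$; the Poincar\'e inequality then yields $aL\leq 2\,C_\textsc{GI}(\mu)$, and the vanishing factor $\Psi(a)=-a\log a$ combined with a careful interpolation across the transition $b\sim 1/e$ absorbs the spurious $|\log a|$ loss. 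The generous constant $150$ is then chosen to comfortably cover the worst case of this two-piece analysis.

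The main obstacle is precisely the degenerate small-$a$ regime: the LS inequality alone only yields $L(\Psi(b)-1/(2e))\leq C_\textsc{GI}(\mu)$, which becomes vacuous when $\Psi(b)<1/(2e)$, while the Poincar\'e fallback by itself only produces $\Psi(a)L\leq -2\log a\cdot C_\textsc{GI}(\mu)$, which blows up as $a\to 0$. Interpolating between the two bounds across the transition $b\sim 1/e$ is what makes a universal constant possible, and the looseness of $150$ is what tolerates the somewhat coarse splicing.
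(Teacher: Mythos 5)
Your preparatory computations are correct: for the capacity-extremal test function $h$ one indeed has $\bE_\mu(h'^2)=1/L$, the entropy bound $\ent{\mu}{h^2}\ge\Psi(b)-\frac{1}{2e}$ with $b=\bE_\mu(h^2)\in[a,\tfrac12]$, and the Poincar\'e fallback $aL\le 4\,C_\textsc{PI}(\mu)\le 2\,C_\textsc{GI}(\mu)$ via \eqref{eq:K-PI-GI}. The gap is the final step. The number $b$ is determined by $\mu$ and $x$; it is not a parameter you can move, so there is nothing to ``interpolate across the transition $b\sim 1/e$''. Whenever $b$ lies below the smaller root of $\Psi(b)=\frac{1}{2e}$ (about $0.07$) --- in particular whenever $a$ is small and the mass of $[x,m]$ sits where $h$ is small, so that $b\approx a$ --- your first estimate has a nonpositive left-hand side and says nothing, and the only surviving information is $aL\le 2\,C_\textsc{GI}(\mu)$, which only gives $\Psi(a)L\le 2\log(1/a)\,C_\textsc{GI}(\mu)$, off by the unbounded factor $\log(1/a)$. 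No choice of constant, however generous, repairs this: the two displayed bounds simply do not imply $\Psi(a)L\le 150\,C_\textsc{GI}(\mu)$ in that regime, and that regime is precisely the one the paper needs, since in Sections \ref{sss:uni-uni}, \ref{sss:gau-uni} and \ref{sss:surprise} the lemma is applied with $a=\mu_p((-\infty,x])$ of order $p\to 0^+$.

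To close the gap you must keep more of the entropy than $\Psi(b)-\frac{1}{2e}$. Retaining the contribution of $(-\infty,x]$, which equals $a\log(1/b)$, and controlling the middle interval by Jensen's inequality relative to its own mass $\mu([x,m])=\tfrac12-a\le\tfrac12$ gives
$$
\ent{\mu}{h^2}\ \ge\ a\log\frac{1}{b}+(b-a)\log\frac{b-a}{b\,\mu([x,m])},
$$
and a case analysis on whether $b$ exceeds a fixed multiple of $a$ (not of $1/e$) can then be pushed to a lower bound of order $\Psi(a)$, which is the missing ingredient; this is genuine work, not absorbed by the factor $150$. Note also that the paper itself does not prove the lemma: it quotes it from \cite[Chapter 6 page 107]{MR2002g:46132}, and in fact it already states the stronger two-sided Hardy-type criterion \eqref{eq:hardy}, from which the lemma follows in one line: since $\log(1/u)\le 2\log\PAR{1+\frac{1}{2u}}$ for $0<u\le\frac12$, one gets $\Psi(a)\int_x^m\frac{dy}{f(y)}\le 2\,b_-\le 2\,C_\textsc{GI}(\mu)$, which even replaces $150$ by $2$.
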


In this whole section, $\mu_0$ and $\mu_1$ are absolutely continuous
probability measures on $\dR$ with cumulative distribution functions $F_0$ and
$F_1$ and probability density functions $f_0$ and $f_1$. For every $0\leq
p\leq 1$, we consider the two-component mixture $\mu_p=p\mu_1+q\mu_0$. The
sharp analysis of the logarithmic Sobolev constant for finite mixtures is a
difficult problem. Also, we decided to focus on some enlightening examples, by
providing a gallery of special cases of $\mu_0$ and $\mu_1$ for which we are
able to control the dependence over $p$ of the Poincar\'e and logarithmic
Sobolev constant of $\mu_p$. Some of them are quite surprising and reveal
hidden subtleties of the logarithmic Sobolev inequality as $\min(p,q)$ goes to
$0$\ldots The key tools used here are Theorem \ref{th:pi-lsi-2-comp} and Lemma
\ref{le:crude}.

\subsubsection{One Gaussian and a sub-Gaussian}
\label{sss:gau-sub-gau}

\noindent\textbf{Setting.}
Here $\mu_1=\cN(0,1)$ while $\mu_0$ is such that $f_0\leq \kappa f_1$ for some
finite constant $\kappa\geq1$.

\noindent\textbf{Claim.} For every $0<p<1$ we have
$C_\textsc{PI}(\mu_p)\leq\max(1,C_\textsc{PI}(\mu_0))+Dq$. This upper bound
goes to $\max(1,C_\textsc{PI}(\mu_0))$ as $p\to1$ and is additionally
uniformly bounded when $p$ runs over $(0,1)$. Similarly,
$C_\textsc{GI}(\mu_p)\leq\al-\be\log(p)$ for some constants $\al,\be>0$ which
do not depend on $p$. This upper bound blows up at speed $-\log(p)$ as
$p\to0$. This is actually the real behavior of $C_\textsc{GI}(\mu_p)$ in some
situations as shown in Section \ref{sss:gau-uni}! 

\noindent\textbf{Proof.} 
Since $\mu_1=\cN(0,1)$, we have $C_\textsc{PI}(\mu_1)=1$ and
$C_\textsc{GI}(\mu_1)=2$. By hypothesis, we have $F_0\leq \kappa F_1$ and
$1-F_0 \leq \kappa(1-F_1)$. Thus, for some $D>0$ and every $0<p<1$,
$$
I(p) \leq 
\frac{2(1+\kappa^2)}{p} %
\PAR{\int_{-\infty}^0\!\frac{F_1^2(x)}{f_1(x)}\,dx%
  +\int_0^{+\infty}\!\frac{(1-F_1(x))^2}{f_1(x)}\,dx}=\frac{D}{p}<\infty.
$$
Now Theorem \ref{th:pi-lsi-2-comp} shows that
$C_\textsc{PI}(\mu_p)\leq\max(1,C_\textsc{PI}(\mu_0))+Dq$. The desired upper
bound for $C_\textsc{GI}(\mu_p)$ follows by the same way and we leave the
details to the reader.

\subsubsection{Two Gaussians with identical mean}
\label{sss:gau-gau}

We have already considered the mixture of two Gaussians with identical
variances and different means in Section \ref{ss:mixgau}. Here we consider a
mixture of two Gaussians with identical means and different variances. It
turns out that this Gaussian mixture is a simple Gaussian sub-case of Section
\ref{sss:gau-sub-gau}, for which we are able to provide a more precise bound
for $C_\textsc{GI}$.

\noindent\textbf{Setting.} $\mu_1=\cN(0,\sigma^2)$ with $\sigma>1$ and
$\mu_0=\cN(0,1)$.

\noindent\textbf{Claim.} 
There exists $C>0$ such that, for any $p<1/2$, 
$$
I(p)\leq C\PAR{\frac{1}{p}}^{\frac{\sigma^2-2}{\sigma^2-1}}%
\quad\text{and}\quad%
C_{\textsc{PI}}(\mu_p)\leq \sigma^2+Cp^{\frac{1}{\sigma^2-1}}.
$$
Moreover we have $\sup_{p\in(0,1)} C_\textsc{PI}(\mu_p)<\infty$. 

\noindent\textbf{Proof.}
We have $f_0\leq \kappa f_1$ for some $\kappa>1$, and we recover the setting
of Section \ref{sss:gau-sub-gau}. Let us provide now an upper bound for $I(p)$
when $p$ is close to $0$. We have $pf_1(x)\geq qf_0(x)$ if and only if
$\ABS{x}\geq \OL{x}_p$ where
$$
\OL{x}_p=\sqrt{\frac{2\sigma^2}{\sigma^2-1}\log\PAR{\frac{q\sigma}{p}}}.
$$
We have, for some constant $C>0$,
\begin{align*}
I(p)&\leq 2\int_{-\infty}^{-1}\frac{F_1(x)^2}{pf_1(x)+qf_0(x)}\,dx%
+2\int_{-1}^0\frac{F_1(x)^2}{f_0(x)}dx\\%
&\leq 2\int_{-\infty}^{-1}\frac{1}{x^2}\frac{f_1(x)^2}{pf_1(x)+qf_0(x)}\,dx+C,
\end{align*}
since $2q\geq 1$ and $F_1(x)\leq f_1(x)/\ABS{x}$. If $p$ is sufficiently
small then $\OL{x}_p>1$ and
$$
\int_{-\infty}^{-1}\frac{1}{x^2}\frac{f_1(x)^2}{pf_1(x)+qf_0(x)}\,dx%
\leq 2\int_{-\OL{x}_p}^{-1}\frac{1}{x^2}\frac{f_1(x)^2}{f_0(x)}\,dx%
+\frac{1}{p}F_1(-\OL{x}_p).
$$
By the definition of $\OL{x}_p$, for some $C>0$,
$$
\frac{1}{p}F_1(-\OL{x}_p)\leq \frac{C}{p} e^{-\OL{x}_p^2/(2\sigma^2)}%
\leq C \PAR{\frac{1}{p}}^{\frac{\sigma^2-2}{\sigma^2-1}}.
$$
If $\sigma^2\leq 2$, then this function of $p$ is bounded. On the other hand,
for some $C>0$,
$$
\int_{-\OL{x}_p}^{-1}\frac{1}{x^2}\frac{f_1(x)^2}{f_0(x)}\,dx%
\leq%
C\int_{-\OL{x}_p}^{-1} \frac{1}{x^2}e^{\frac{\sigma^2-2}{2\sigma^2}x^2}\,dx.
$$
If $\sigma^2\leq 2$, then this function of $p$ is bounded. If $\sigma^2>2$,
then, for some $C>0$,
$$
\int_{-\OL{x}_p}^{-1} e^{\frac{\sigma^2-2}{2\sigma^2}x^2}\,dx%
\leq C e^{\frac{\sigma^2-2}{2\sigma^2}\OL{x}_p^2}%
\leq C \PAR{\frac{1}{p}}^{\frac{\sigma^2-2}{\sigma^2-1}}.
$$
As a conclusion, if $\sigma^2\leq 2$, then $\sup_{p\in(0,1)}I(p)<\infty$,
whereas if $\sigma^2> 2$, then for some constant $C>0$ and any $p<1/2$,
$$
I(p)\leq C \PAR{\frac{1}{p}}^{\frac{\sigma^2-2}{\sigma^2-1}}
$$
The bound of $C_{\textsc{PI}}$ follows from Theorem
\ref{th:pi-lsi-2-comp}. For the logarithmic Sobolev inequality, one may use 
the Bobkov-G\"otze criterion.

\subsubsection{Two uniforms with overlapping supports}
\label{sss:uni-uni}

\noindent\textbf{Setting.} 
Here $\mu_0=\cU([0,1])$ and $\mu_1=\cU([a,a+1])$ for some $a\in [0,1]$.

\noindent\textbf{Claim.} For every $p\in(0,1)$, we have
$$
C_\textsc{PI}(\mu_p)\leq \pi^{-2}+\frac{a^2}{3}\PAR{3pq(1-a)+a} %
$$
and for $p\leq 1/2$,
$$
C_\textsc{GI}(\mu_p)\geq
\frac{a^2}{600}\log(1/p).
$$
\noindent\textbf{Proof.}
It is known (see e.g. \cite{gentil}) that $C_\textsc{PI}(\cU([0,1])=\pi^{-2}$
while $C_\textsc{GI}(\cU([0,1])=2\pi^{-2}$. By translation invariance, we also
have $C_\textsc{PI}(\cU([1,1+a])=\pi^{-2}$ and
$C_\textsc{GI}(\cU([1,1+a])=2\pi^{-2}$. The desired result follows from
Theorem \ref{th:pi-lsi-2-comp} since for $p\in(0,1)$,
$$
I(p)%
=\int_0^a\frac{x^2}{q}\,dx+\int_a^1\frac{a^2}{p+q}\,dx%
+\int_1^{a+1}\frac{(1+a-x)^2}{p}\,dx%
=\frac{a^2}{3pq}\PAR{3pq(1-a)+a}.
$$
The minoration of $C_\textsc{GI}(\mu_p)$ follows from Lemma
\ref{le:crude}:
$$
150C_\textsc{GI}(\mu_p)\geq%
\Psi(\mu_p(0,a/2])\int_{a/2}^a\!\frac{1}{f_p(y)}\,dy%
=\Psi\PAR{\frac{pa}{2}}\frac{a}{2p}.
$$

\subsubsection{One Gaussian and a uniform}
\label{sss:gau-uni}

\noindent\textbf{Setting.} 
Here $\mu_1=\cN(0,1)$ and $\mu_0=\cU([-1,+1])$.

\noindent\textbf{Claim.} There exists a real constant $C>0$ such that
$C_\textsc{GI}(\mu_p)\geq -C\log(p)$ for every $p\in(0,1)$. Also,
$C_\textsc{GI}(\mu_p)$ blows up at speed $-\log(p)$ as $p\to0^+$. Moreover,
$\mu_p$ satisfies a sub-Gaussian concentration of measure for Lipschitz
functions, uniformly in $p\in(0,1)$. This similarity with the Bernoulli law
$\cB(p)$ suggests that the blow up phenomenon of $C_\textsc{GI}(\mu_p)$ is due
to the asymptotic support reduction from $\dR$ to $[-1,+1]$ when $p$ goes to
$0^+$. Actually, Section \ref{sss:surprise} shows that this intuition is
false.

\noindent\textbf{Proof.} 
We have $f_0\leq \kappa f_1$ for some constant $\kappa\geq1$. Also, for every
$p\in(0,1)$, the result of Section \ref{sss:gau-sub-gau} gives that
$C_\textsc{GI}(\mu_p)\leq \al-\beta\log(p)$ for some constants $\al>0$ and
$\be>0$ independent of $p$. Now, by Lemma \ref{le:crude},
\begin{align*}
  150\,C_\textsc{GI}(p) %
  &\geq \Psi(pF_1(-2)+qF_0(-2))%
  \int_{-2}^0\!\frac{1}{pf_1(u)+qf_0(u)}\,du \\
  &= \Psi(pF_1(-2))%
  \int_{-2}^0\!\frac{1}{pf_1(u)+qf_0(u)}\,du \\
  &\geq -\PAR{F_1(-2)\int_{-2}^{-1}\!\frac{1}{f_1(u)}\,du}\log(p).
\end{align*}

\subsubsection{Surprising blow up}
\label{sss:surprise}

\noindent\textbf{Setting.} Here $f_1(x)=Z_1^{-1}e^{-x^2}$ and
$f_0(x)=Z_0^{-1}e^{-|x|^a}$ for some fixed real number $a>2$, with
$Z_1=\pi^{-1/2}$ and $Z_0=2\Gamma(a^{-1})a^{-1}$. Note that $\mu_0$ has
lighter tails than $\mu_p$ with $p>0$.

\noindent\textbf{Claim.}
There exists a real constant $C>0$ which may depend on $a$ such that
$$
C_\textsc{GI}(\mu_p)\geq C(-\log(p))^{1-2a^{-1}}
$$
for small enough $p$. In particular, $C_\textsc{GI}(\mu_p)$ blows up as
$p\to0^+$.

\noindent\textbf{Comments.}
As mentioned in the introduction, we have
$\max(C_\textsc{GI}(\mu_0),C_\textsc{GI}(\mu_1))<\infty$. We have seen in
Section \ref{sss:gau-gau} that $C_\textsc{GI}(\mu_p)$ does not blow up as
$p\to0^+$ if $a=2$. Here $a>2$, and $\mu_0$ has strictly lighter tails than
$\mu_p$ for every $p\in(0,1)$, and moreover, this difference is at the level
of the log-power of the tails, not only at the level of the constants in front
of the log-power. The potential (-$\log$-density) of $\mu_p$ has multiple
wells, see Figure \ref{fi:explo}. This example shows also that the blow up
speed of $C_\textsc{GI}(\mu_p)$ as $p\to0^+$ cannot be improved by considering
a mixture of fully supported laws. Note that $\mu_0\to\cU([-1,+1])$ as
$a\to\infty$, and the result is thus compatible with Section
\ref{sss:gau-uni}.

\noindent\textbf{Proof.} 
Since $f_0\leq \kappa f_1$ for some constant $\kappa\geq1$, Section
\ref{sss:gau-sub-gau} gives $C_\textsc{GI}(\mu_p)<\infty$ for every
$p\in(0,1)$. Moreover, $p\mapsto C_\textsc{GI}(\mu_p)$ is uniformly bounded on
$(p_0,1)$ for every $p_0>0$. Let us study the behavior of this function as
$p\to0$. In the sequel we assume that $p<p_0$ where $p_0$ satisfies
$p_0Z_0=q_0Z_1$. The immediate tails comparison gives $qf_0(x)\leq pf_1(x)$
for large enough $x$. Let us find some explicit bound on $x$. The inequality
$qf_0(x)\leq pf_1(x)$ writes $|x|^a-x^2\geq \log(qZ_1)-\log(pZ_0)$. Now,
$|x|^a-x^2\geq \frac{1}{2}|x|^a$ for $|x|^{a-2}\geq 2$. The non-negative
solution of $\frac{1}{2}|x|^a=\log(qZ_1)-\log(pZ_0)$ is
$$
\OL{x}_p=\PAR{2\log\PAR{\frac{q}{p}\frac{Z_1}{Z_0}}}^{1/a}.
$$
If $p$ is small enough, then $|\OL{x}_p|^{a-2}\geq 2$ and therefore,
$qf_0(x)\leq pf_1(x)$ for any $|x|\geq \OL{x}_p$. Now, by Lemma
\ref{le:crude}, for small enough $p$,
$$
150\,C_\textsc{GI}(\mu_p) 
\geq
\Psi(pF_1(-2\OL{x}_p)+qF_0(-2\OL{x}_p))
\int_{-2\OL{x}_p}^0\!\frac{1}{pf_1(u)+qf_0(u)}\,du.
$$
For small enough $p$, we have $\max(F_0,F_1)(-2\OL{x}_p)<e^{-1}$ and thus, for
some constant $C>0$,
$$
\Psi(pF_1(-2\OL{x}_p)+qF_0(-2\OL{x}_p)) %
\geq \Psi(pF_1(-2\OL{x}_p)) %
\geq -pF_1(-2\OL{x}_p)\log(p) %
\geq C\frac{e^{-4\OL{x}_p^2}}{\OL{x}_p}\Psi(p).
$$
On the other hand, since $qf_0(x)\leq pf_1(x)$ for $|x|\geq \OL{x}_p$, we get
for some constant $C>0$,
$$
\int_{-2\OL{x}_p}^0\!\frac{1}{pf_1(u)+qf_0(u)}\,du %
\geq \int_{-2\OL{x}_p}^{-\OL{x}_p}\!\frac{du}{2pf_1(u)} %
\geq \frac{Ce^{4\OL{x}_p^2}}{p\OL{x}_p}.
$$
Consequently, for some real constant $C>0$,
$$
150\,C_\textsc{GI}(\mu_p) \geq -C\frac{\log(p)}{\OL{x}_p^2}.
$$
Now, by using the explicit expression of $\OL{x}_p$, we finally obtain for some
real constant $C>0$,
$$
C_\textsc{GI}(\mu_p) \geq C\,(-\log(p))^{1-2a^{-1}}.
$$

\begin{figure}[htbp]
  \begin{center}
    \includegraphics[scale=0.4]{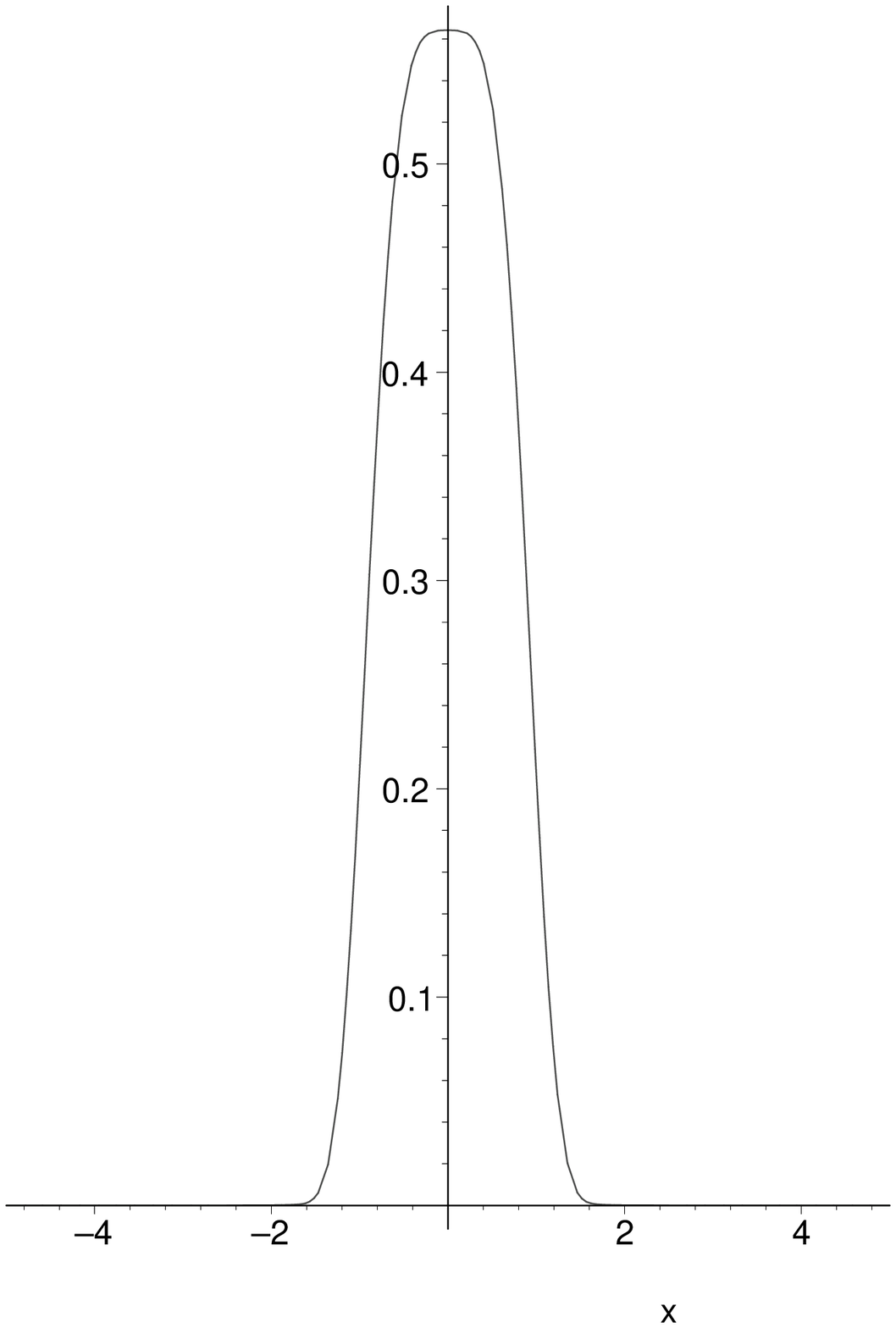}
    \includegraphics[scale=0.4]{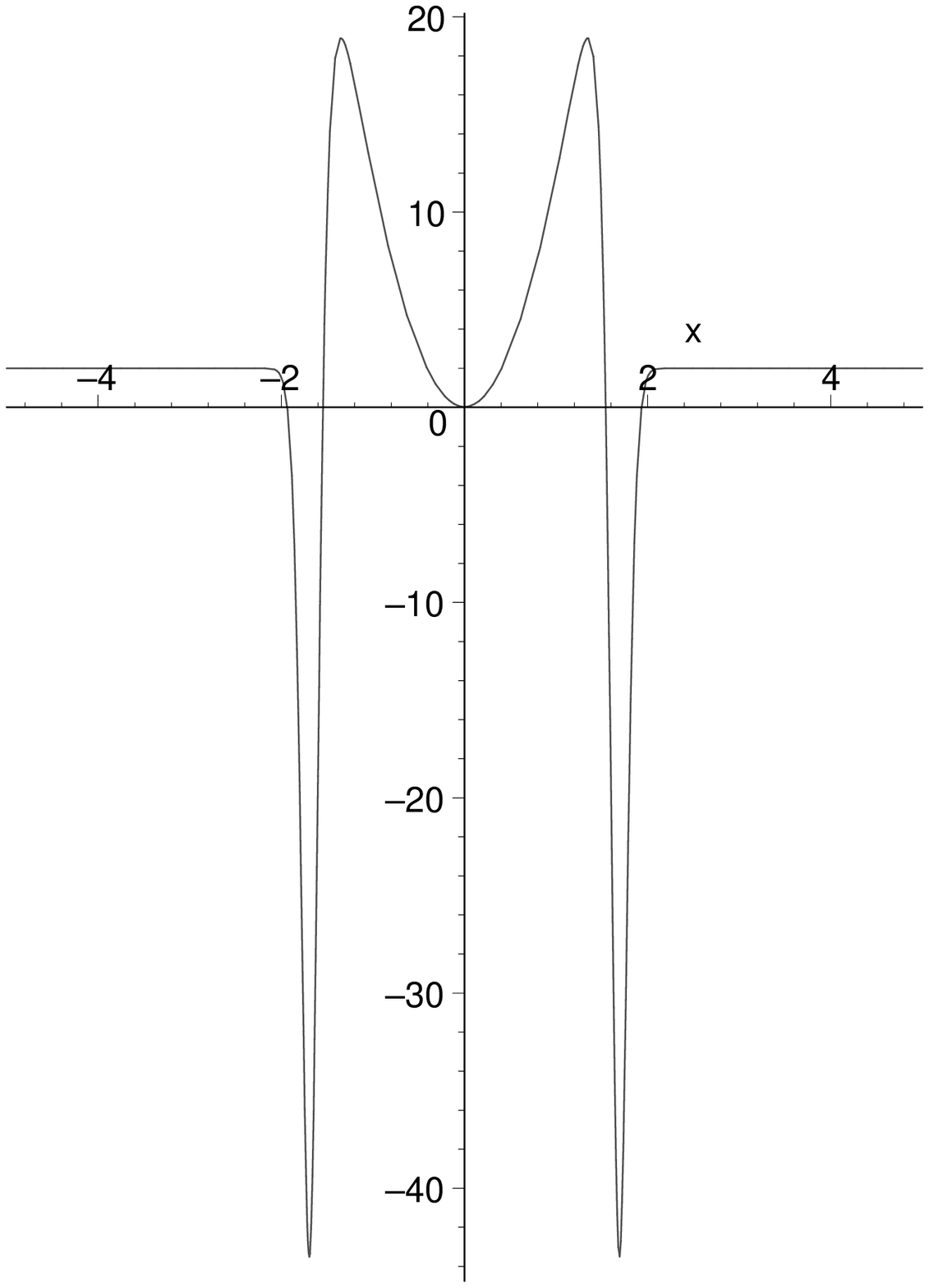}
    \caption{Density and second derivative of $-\log$-density of $\mu_p$ for
      Example \ref{sss:surprise} with $p=1/100$ and $a=4$. The second plot
      reveals a deep multiple wells potential.}
    \label{fi:explo}
  \end{center}
\end{figure}

\subsection{Multivariate mean-difference bound}

It is quite natural to ask for a multidimensional counterpart of the
mean-difference Lemma \ref{le:Ip}. Let us give some informal ideas to attack
this problem. Let $\mu_0$ and $\mu_1$ be two probability measures on $\dR^d$,
and consider as usual the mixture $\mu_p=p\mu_1+q\mu_0$ with $p\in(0,1)$ and
$q=1-p$. It is well known (see for instance \cite{villani-livre}) that if
$\mu_0$ and $\mu_1$ are regular enough, then there exists a map
$T:\dR^d\to\dR^d$ such that the image measure $T\cdot\mu_0$ of $\mu_0$ by $T$
is $\mu_1$ and
$$
W_2(\mu_0,\mu_1)^2=\int_{\dR^d}\!\ABS{T(x)-x}^2\,\mu_0(dx).
$$  
If $\mu_{(s)}$ denotes the image of $\mu_0$ by $x\mapsto sT(x)+(1-s)x$ for
every $0<s<1$, then
$$
\PAR{\bE_{\mu_1}f-\bE_{\mu_0}f}^2
=\PAR{\int_0^1\!\int_{\dR^d}\!(T(x)-x)\cdot\na
  f(sT(x)+(1-s)x)\,d\mu_0(x)\,ds}^2.
$$
By Cauchy-Schwarz's inequality, we get
$$
\PAR{\bE_{\mu_1}f-\bE_{\mu_0}f}^2
\leq \PAR{\int_{\dR^d}\!\ABS{T(x)-x}^2\,d\mu_0(x)}
\PAR{\int_0^1\!\int_{\dR^d}\!\ABS{\na f(x)}^2\,d\mu_{(s)}(x)\,ds}
$$
and therefore 
$$
\PAR{\bE_{\mu_1}f-\bE_{\mu_0}f}^2 %
\leq W_2(\mu_1,\mu_0)^2 %
\int_{\dR^d}\!\int_0^1\!\ABS{\na f(x)}^2\,d\mu_{(s)}(x)\,ds.
$$
This shows that in order to control the mean-difference term
$\PAR{\bE_{\mu_1}f-\bE_{\mu_0}f}^2$ by $\bE_{\mu_p}(|\nabla f|^2)$, it is
enough to find a real constant $C_p>0$ such that $\overline{\mu}\leq C_p\mu_p$
where
$$
\overline{\mu}(A)=\int_0^1\!\mu_{(s)}(A)\,ds.
$$
Unfortunately, this is not feasible if for some $s\in(0,1)$, the support of
$\mu_{(s)}$ is not included in the support of $\mu_p$ (union of the supports
of $\mu_0$ and $\mu_1$ if $p\in(0,1)$). This problem is due to the linear
interpolation used to define $\mu_{(s)}$ via $T$. The linear interpolation
will fail if the support of $\mu_p$ is a non-convex connected set. Let us
adopt an alternative pathwise interpolation scheme. For each $x\in
S_0=\mathrm{supp}(\mu_0)$, let us pick a continuous and piecewise smooth
interpolating path $\gamma_x:[0,1]\to\dR^d$ such that $\gamma_x(0)=x$ and
$\gamma_x(1)=T(x)$. Then for every smooth $f:\dR^d\to\dR$,
$$
f(x)-f(T(x))=\int_0^1\!\dot{\gamma}_x(s)\,\nabla f(\gamma_x(s))\,ds
\leq \sqrt{\int_0^1\!|\dot{\gamma}_x(s)|^2\,ds}\ %
\sqrt{\int_0^1\!|\nabla f|^2(\gamma_x(s))\,ds}.
$$
As a consequence, we have 
$$
\PAR{\bE_{\mu_0}f-\bE_{\mu_1}f}^2
\leq 
\PAR{\int_{S_0}\!\int_0^1\!|\dot{\gamma}_x(s)|^2\,ds\,\mu_0(dx)}\ %
\PAR{\int_{S_0}\!\int_0^1\!|\nabla f|^2(\gamma_x(s))\,ds\,\mu_0(dx)}.
$$
Now, let $\mu_{(s)}$ be the image measure of $\mu_0$ by the map
$x\mapsto\gamma_x(s)$, where here again $\overline\mu$ is the measure defined
by $\overline\mu(A)=\int_0^1\!\mu_{(s)}(A)\,ds$. With this notation, we have
$$
\PAR{\bE_{\mu_0}f-\bE_{\mu_1}f}^2
\leq 
\PAR{\int_{S_0}\!\int_0^1\!|\dot{\gamma}_x(s)|^2\,ds\,\mu_0(dx)}\ %
\PAR{\int_{\dR^d}\!|\nabla f|^2(x)\,\overline\mu(dx)}.
$$
Note that
$$
\PAR{\int_{S_0}\!\int_0^1\!|\dot{\gamma}_x(s)|^2\,ds\,\mu_0(dx)}
\geq W_2(\mu_0,\mu_1)^2
$$
with equality when $\gamma_x$ is the linear interpolation map between $x$ and
$T(x)$ for every $x\in S_0$. The mean-difference control that we seek for
follows then immediately if there exists a real constant $C_p>0$ such that
$\overline\mu \leq C_p\mu_p$. The problem is thus reduced to the choice of an
interpolation scheme $\ga$ such that the support of $\overline\mu$ is included
in the support of $\mu_p$ (which is the union of the supports of $\mu_0$ and
$\mu_1$ as soon as $0<p<1$). Let us give now two enlightening examples.

\begin{xpl}[When the linear interpolation map is optimal]\label{ex:linopt}
  Consider the two-\-dimen\-sional example where $\mu_0=\cU([0,2]\times[0,2])$
  and $\mu_1=\cU([1,3]\times[0,2])$. If $\gamma$ is the natural linear
  interpolation map given by $\gamma_x(s)=x+se_1$ then
  $\mu_{(s)}=\cU([s,s+2]\times[0,2])$ is supported inside
  $\mathrm{supp}(\mu_0)\cup\mathrm{supp}(\mu_1)$. This is due to the convexity
  of this union. Also, the linear interpolation map is here optimal. Moreover,
  elementary computations reveal that 
  $$
  C_p=\frac{1}{\min(p,q)}
  \quad\text{and}\quad
  W_2(\mu_0,\mu_1)^2=1.
  $$
  Therefore, for every $0<p<1$ and any smooth $f:\dR^2\to\dR$,
  $$
  \PAR{\bE_{\mu_0}f-\bE_{\mu_1}f}^2 %
  \leq \frac{1}{\min(p,q)}\bE_{\mu_p}(|\nabla f|^2).
  $$
\end{xpl}

\begin{xpl}[When the linear interpolation map fails]\label{ex:linnopt}
  In contrast, for the example where $\mu_0=\cU([0,2]\times[0,2])$ and
  $\mu_1=\cU([1,3]\times[1,3])$ and if $\gamma$ is the natural linear
  interpolation map given by $\gamma_x(s)=x+s(e_1+e_2)$ then $\mu_{(s)}$ is
  not supported in $\mathrm{supp}(\mu_0)\cup\mathrm{supp}(\mu_1)$ and this
  union is not convex. If $A=[0,1]\times[2,3]$ then $\mu_{(s)}(A)>0$ for every
  $0<s<1$ while $\mu_p(A)=0$ for every $0<p<1$ and hence there is no finite
  constant $C_p>0$ such that $\overline\mu\leq C_p\mu_p$. This shows that the
  linear interpolation map fails here. Let us give an alternative
  interpolation map which leads to the desired result. We set for every
  $x\in\mathrm{supp}(\mu_0)$ and every $0\leq s\leq 1$, with
  $\mathbf{1}=(e_1,e_1)$,
  $$
  \gamma_x(s)=
  \begin{cases}
    (1-s)x+2s\mathbf{1} & \text{if $0\leq s\leq \frac{1}{2}$} \\
    sx+\mathbf{1} & \text{otherwise}.
  \end{cases}
  $$
  This corresponds to a two-steps linear interpolation between the squares
  $[0,2]^2$ and $[1,3]^2$ with intermediate square $[1,2]^2$.
  For every $0\leq s\leq 1$, 
  $$
  \mu_{(s)}=
  \begin{cases}
    \cU([2s,2]^2) & \text{if $0\leq s\leq \frac{1}{2}$} \\
    \cU([1,1+2s]^2) & \text{otherwise}.
  \end{cases}
  $$
  Note that we constructed $\gamma$ in such a way that $\mu_{(s)}$ is always
  supported in $\mathrm{supp}(\mu_0)\cup\mathrm{supp}(\mu_1)$. Elementary
  computations reveal that for every $0<p<1$,
  $$
  \int_{S_0}\!\int_0^1\!|\dot{\gamma}_x(s)|^2\,ds\,\mu_0(dx)=\frac{8}{3} %
  \quad\text{and}\quad %
  \overline\mu\leq \frac{4}{\min(p,q)}\mu_p.
  $$
  Finally, putting all together, we obtain for every $0<p<1$ and smooth
  $f:\dR^2\to\dR$,
  $$
  \PAR{\bE_{\mu_0}f-\bE_{\mu_1}f}^2 %
  \leq \frac{32}{3\min(p,q)}\bE_{\mu_p}(|\nabla f|^2).
  $$
  As a conclusion, one can retain that the natural interpolation problem
  associated to the control of the mean-difference involves a kind of
  support-constrained interpolation for mass transportation.
\end{xpl}

\bigskip

{\noindent\textbf{Acknowledgements.} The authors would like to warmly thank
  Arnaud \textsc{Guillin}, S\'ebastien \textsc{Gou\"ezel}, and all the
  participants of the Working Seminar \emph{In\'egalit\'es fonctionnelles}
  held on March 2008 at the Institut Henri Poincar\'e of Paris for fruitful
  discussions. The final form of the paper has benefited from the fine
  comments of two anonymous reviewers.}

\addcontentsline{toc}{section}{\refname}%
{
\footnotesize
\bibliography{mix}

\providecommand{\bysame}{\leavevmode\hbox to3em{\hrulefill}\thinspace}
\providecommand{\MR}{\relax\ifhmode\unskip\space\fi MR }
\providecommand{\MRhref}[2]{%
  \href{http://www.ams.org/mathscinet-getitem?mr=#1}{#2}
}
\providecommand{\href}[2]{#2}
\begin{thebibliography}{10}

\bibitem{MR2002g:46132}
C.~An{\'e}, S.~Blach{\`e}re, D.~Chafa{\"\i}, P.~Foug{\`e}res, I.~Gentil,
  F.~Malrieu, C.~Roberto, and G.~Scheffer, \emph{Sur les in{\'e}galit{\'e}s de
  {S}obolev logarithmiques}, Soci{\'e}t{\'e} Math{\'e}matique de France, Paris,
  2000, Preface by D. Bakry and M. Ledoux.

\bibitem{MR889476}
D.~Bakry and M.~{\'E}mery, \emph{Diffusions hypercontractives}, S\'eminaire de
  probabilit\'es, XIX, 1983/84, Lecture Notes in Math., vol. 1123, Springer,
  Berlin, 1985, pp.~177--206.

\bibitem{MR2317340}
D.~Bakry, M.~Ledoux, and F.-Y. Wang, \emph{Perturbations of functional
  inequalities using growth conditions}, J. Math. Pures Appl. (9) \textbf{87}
  (2007), no.~4, 394--407.

\bibitem{bcr}
F.~Barthe, P.~Cattiaux, and C.~Roberto, \emph{Interpolated inequalities between
  exponential and {G}aussian, {O}rlicz hypercontractivity and isoperimetry},
  Rev. Mat. Iberoam. \textbf{22} (2006), no.~3, 993--1067.

\bibitem{br}
F.~Barthe and C.~Roberto, \emph{Sobolev inequalities for probability measures
  on the real line}, Studia Math. \textbf{159} (2003), no.~3, 481--497,
  Dedicated to Professor Aleksander Pe\l czy\'nski on the occasion of his 70th
  birthday (Polish).

\bibitem{MR2094433}
S.~G. Bobkov, \emph{Concentration of normalized sums and a central limit
  theorem for noncorrelated random variables}, Ann. Probab. \textbf{32} (2004),
  no.~4, 2884--2907.

\bibitem{MR2137450}
\bysame, \emph{Generalized symmetric polynomials and an approximate de
  {F}inetti representation}, J. Theoret. Probab. \textbf{18} (2005), no.~2,
  399--412.

\bibitem{MR1682772}
S.~G. Bobkov and F.~G{\"o}tze, \emph{Exponential integrability and
  transportation cost related to logarithmic {S}obolev inequalities}, J. Funct.
  Anal. \textbf{163} (1999), no.~1, 1--28.

\bibitem{bolley-villani}
F.~Bolley and C.~Villani, \emph{Weighted {C}sisz\'ar-{K}ullback-{P}insker
  inequalities and applications to transportation inequalities}, Ann. Fac. Sci.
  Toulouse (2005), 331--352.

\bibitem{caffarelli-fkg}
L.~A. Caffarelli, \emph{Monotonicity properties of optimal transportation and
  the {FKG} and related inequalities}, Comm. Math. Phys. \textbf{214} (2000),
  no.~3, 547--563.

\bibitem{caffarelli-fkg-bis}
\bysame, \emph{Erratum: [{C}omm.\ {M}ath.\ {P}hys. {\bf 214} (2000), no. 3,
  547--563]}, Comm. Math. Phys. \textbf{225} (2002), no.~2, 449--450.

\bibitem{MR1132315}
E.~A. Carlen, \emph{Superadditivity of {F}isher's information and logarithmic
  {S}obolev inequalities}, J. Funct. Anal. \textbf{101} (1991), no.~1,
  194--211.

\bibitem{chatterjee-2007}
S.~Chatterjee, \emph{Spin glasses and {S}tein's method}, preprint
  \texttt{arXiv:0706.3500v2 [math.PR]}, 2007.

\bibitem{MR1410112}
P.~Diaconis and L.~Saloff-Coste, \emph{Logarithmic {S}obolev inequalities for
  finite {M}arkov chains}, Ann. Appl. Probab. \textbf{6} (1996), no.~3,
  695--750.

\bibitem{MR2078555}
H.~Djellout, A.~Guillin, and L.~Wu, \emph{Transportation cost-information
  inequalities and applications to random dynamical systems and diffusions},
  Ann. Probab. \textbf{32} (2004), no.~3B, 2702--2732.

\bibitem{MR624267}
B.~S. Everitt and D.~J. Hand, \emph{Finite mixture distributions}, Chapman \&
  Hall, London, 1981, Monographs on Applied Probability and Statistics.

\bibitem{MR1863703}
V.~P. Fonf, J.~Lindenstrauss, and R.~R. Phelps, \emph{Infinite dimensional
  convexity}, Handbook of the geometry of Banach spaces, Vol. I, North-Holland,
  Amsterdam, 2001, pp.~599--670.

\bibitem{MR2265601}
S.~Fr{\"u}hwirth-Schnatter, \emph{Finite mixture and {M}arkov switching
  models}, Springer Series in Statistics, Springer, New York, 2006.

\bibitem{gentil}
I.~Gentil, \emph{In\'egalit\'es de {S}obolev logarithmique et de {P}oincar\'e
  pour la loi uniforme}, unpublished note, available on the author's web page,
  2004.

\bibitem{MR1814423}
I.~Gentil and C.~Roberto, \emph{Spectral gaps for spin systems: some non-convex
  phase examples}, J. Funct. Anal. \textbf{180} (2001), no.~1, 66--84.

\bibitem{gozlan-t2}
N.~Gozlan, \emph{A characterization of dimension free concentration in terms of
  transportation inequalities}, preprint \texttt{arXiv: 0804.3089 [math.PR]},
  2008.

\bibitem{MR708367}
M.~Gromov and V.~D. Milman, \emph{A topological application of the
  isoperimetric inequality}, Amer. J. Math. \textbf{105} (1983), no.~4,
  843--854.

\bibitem{MR0420249}
L.~Gross, \emph{Logarithmic {S}obolev inequalities}, Amer. J. Math. \textbf{97}
  (1975), no.~4, 1061--1083.

\bibitem{MR2325763}
\bysame, \emph{Hypercontractivity, logarithmic {S}obolev inequalities, and
  applications: a survey of surveys}, Diffusion, quantum theory, and radically
  elementary mathematics, Math. Notes, vol.~47, Princeton Univ. Press,
  Princeton, NJ, 2006, pp.~45--73.

\bibitem{MR1936110}
B.~Helffer, \emph{Semiclassical analysis, {W}itten {L}aplacians, and
  statistical mechanics}, Series in Partial Differential Equations and
  Applications, vol.~1, World Scientific Publishing Co. Inc., River Edge, NJ,
  2002.

\bibitem{hoeffding}
W.~Hoeffding, \emph{Probability inequalities for sums of bounded random
  variables}, J. Amer. Statist. Assoc. \textbf{58} (1963), 13--30.

\bibitem{MR893137}
R.~Holley and D.~Stroock, \emph{Logarithmic {S}obolev inequalities and
  stochastic {I}sing models}, J. Statist. Phys. \textbf{46} (1987), no.~5-6,
  1159--1194.

\bibitem{MR2099650}
M.~Jerrum, J.-B. Son, P.~Tetali, and E.~Vigoda, \emph{Elementary bounds on
  {P}oincar\'e and log-{S}obolev constants for decomposable {M}arkov chains},
  Ann. Appl. Probab. \textbf{14} (2004), no.~4, 1741--1765.

\bibitem{MR2141356}
O.~Johnson, \emph{Convergence of the {P}oincar\'e constant}, Teor. Veroyatnost.
  i Primenen. \textbf{48} (2003), no.~3, 615--620, preprint version available
  at \texttt{arXiv.org:math.PR/0206227}.

\bibitem{MR2219345}
I.~Kontoyiannis and M.~Madiman, \emph{Measure concentration for compound
  {P}oisson distributions}, Electron. Comm. Probab. \textbf{11} (2006), 45--57
  (electronic).

\bibitem{Kont-Madi}
L.~Kontoyiannis and M.~Madiman, \emph{Entropy, compound {P}oisson
  approximation, log-{S}obolev inequalities and measure concentration},
  {I}nformation {T}heory {W}orkshop,24--29 Oct. 2004. {I}{E}{E}{E}, 2004,
  pp.~71--75.

\bibitem{MR1957087}
R.~Lata{\l}a, \emph{On some inequalities for {G}aussian measures}, Proceedings
  of the International Congress of Mathematicians, Vol. II (Beijing, 2002)
  (Beijing), Higher Ed. Press, 2002, pp.~813--822.

\bibitem{MR2002j:60002}
M.~Ledoux, \emph{Concentration of measure and logarithmic {S}obolev
  inequalities}, S{\'e}minaire de Probabilit{\'e}s, XXXIII, Springer, Berlin,
  1999, pp.~120--216.

\bibitem{ledoux-ams}
\bysame, \emph{The concentration of measure phenomenon}, Mathematical Surveys
  and Monographs, vol.~89, American Mathematical Society, Providence, RI, 2001.

\bibitem{MR1910641}
N.~Madras and D.~Randall, \emph{Markov chain decomposition for convergence rate
  analysis}, Ann. Appl. Probab. \textbf{12} (2002), no.~2, 581--606.

\bibitem{MR838213}
K.~Marton, \emph{A simple proof of the blowing-up lemma}, IEEE Trans. Inform.
  Theory \textbf{32} (1986), no.~3, 445--446.

\bibitem{MR1404531}
\bysame, \emph{Bounding {$\overline d$}-distance by informational divergence: a
  method to prove measure concentration}, Ann. Probab. \textbf{24} (1996),
  no.~2, 857--866.

\bibitem{maz}
V.~G. Maz'ja, \emph{Sobolev spaces}, Springer Series in Soviet Mathematics,
  Springer-Verlag, Berlin, 1985, Translated from the Russian by T. O.
  Shaposhnikova.

\bibitem{MR926484}
G.~McLachlan and K.~Basford, \emph{Mixture models}, Statistics: Textbooks and
  Monographs, vol.~84, Marcel Dekker Inc., New York, 1988, Inference and
  applications to clustering.

\bibitem{MR1789474}
G.~McLachlan and D.~Peel, \emph{Finite mixture models}, Wiley Series in
  Probability and Statistics: Applied Probability and Statistics,
  Wiley-Interscience, New York, 2000.

\bibitem{miclo}
L~Miclo, \emph{Quand est-ce que des bornes de {H}ardy permettent de calculer
  une constante de {P}oincar\'e exacte sur la droite ?}, preprint,
  \url{http://hal.archives-ouvertes.fr/hal-00017875/en/}, 2005.

\bibitem{MR2291434}
F.~Otto and M.~G. Reznikoff, \emph{A new criterion for the logarithmic
  {S}obolev inequality and two applications}, J. Funct. Anal. \textbf{243}
  (2007), no.~1, 121--157.

\bibitem{MR1835574}
R.~R. Phelps, \emph{Lectures on {C}hoquet's theorem}, second ed., Lecture Notes
  in Mathematics, vol. 1757, Springer-Verlag, Berlin, 2001.

\bibitem{MR1105086}
S.~T. Rachev, \emph{Probability metrics and the stability of stochastic
  models}, Wiley Series in Probability and Mathematical Statistics: Applied
  Probability and Statistics, John Wiley \& Sons Ltd., Chichester, 1991.

\bibitem{MR1490046}
L.~Saloff-Coste, \emph{Lectures on finite {M}arkov chains}, Lectures on
  probability theory and statistics (Saint-Flour, 1996), Lecture Notes in
  Math., vol. 1665, Springer, Berlin, 1997, pp.~301--413.

\bibitem{MR0109101}
A.~J. Stam, \emph{Some inequalities satisfied by the quantities of information
  of {F}isher and {S}hannon}, Information and Control \textbf{2} (1959),
  101--112.

\bibitem{MR530375}
V.~N. Sudakov, \emph{Geometric problems in the theory of infinite-dimensional
  probability distributions}, Proc. Steklov Inst. Math. (1979), no.~2, i--v,
  1--178, Cover to cover translation of Trudy Mat. Inst. Steklov {\bf 141}
  (1976).

\bibitem{takatsu}
A.~Takatsu, \emph{On {W}asserstein geometry of the space of {G}aussian
  measures}, \texttt{arXiv:0801.2250 [math.DG]}, 2008.

\bibitem{MR1392331}
M.~Talagrand, \emph{Transportation cost for {G}aussian and other product
  measures}, Geom. Funct. Anal. \textbf{6} (1996), no.~3, 587--600.

\bibitem{MR838090}
D.~M. Titterington, A.~F.~M. Smith, and U.~E. Makov, \emph{Statistical analysis
  of finite mixture distributions}, Wiley Series in Probability and
  Mathematical Statistics: Applied Probability and Statistics, John Wiley \&
  Sons Ltd., Chichester, 1985.

\bibitem{villani-livre}
C.~Villani, \emph{Topics in optimal transportation}, Graduate Studies in
  Mathematics, vol.~58, American Mathematical Society, Providence, RI, 2003.

\bibitem{villani-saint-flour}
\bysame, \emph{Optimal transport, old and new}, lecture notes, \'Ecole
  d'\'et\'e de {S}aint-{F}lour 2005, preprint, 2007.

\end{thebibliography}
}
\bibliographystyle{amsplain}

\vfill

\begin{flushright}{\footnotesize\texttt{Compiled \today}}\end{flushright}

\vfill

{\footnotesize %
  \noindent Djalil \textsc{Chafa\"\i} %
  \noindent\url{mailto:chafai(AT)math.univ-toulouse.fr}
  
  \medskip
  \noindent\textsc{UMR181 INRA ENVT, 23 Chemin des Capelles
    F-31076  Toulouse Cedex 3, France.}

  \medskip

  \noindent
  \textsc{UMR5583 CNRS
    Institut de Math\'ematiques de Toulouse (IMT) \\
    Universit\'e de Toulouse III, 118 route de Narbonne, F-31062, Toulouse
    Cedex, France.}

  \bigskip
  
  \noindent Florent \textsc{Malrieu} %
  \url{mailto:florent.malrieu(AT)univ-rennes1.fr}

  \medskip

  \noindent\textsc{UMR 6625 CNRS Institut de Recherche Math\'ematique de
    Rennes (IRMAR) \\ Universit\'e de Rennes I, Campus de Beaulieu, F-35042
    Rennes \textsc{Cedex}, France.}
  
\vfill

}

\end{document}